\newtheorem{theorem}{Theorem}[section]
\newtheorem{corollary}[theorem]{Corollary}
\newtheorem{lemma}[theorem]{Lemma}
\newtheorem{proposition}[theorem]{Proposition}
\theoremstyle{definition}
\theoremstyle{definition}
\newtheorem{definition}[theorem]{Definition}
\newtheorem{remark}[theorem]{Remark}
\newcommand{\Z}{\mathbb Z}
\newcommand{\N}{\mathbb N}
\newcommand{\Hom}{\operatorname{Hom}}
\newcommand{\End}{\operatorname{End}}
\newcommand{\rad}{\operatorname{rad}}
\newcommand{\GL}{\operatorname{GL}}
\newcommand{\Tr}{\operatorname{Tr}}
\newcommand{\Irr}{\operatorname{Irr}}
\newcommand{\Aut}{\operatorname{Aut}}
\newcommand{\Out}{\operatorname{Out}}
\newcommand{\PSL}{{\operatorname{PSL}}}
\newcommand{\SL}{{\operatorname{SL}}}
\newcommand{\PGL}{{\operatorname{PGL}}}
\newcommand{\V}{\mathrm{V}}
\keywords{Zassenhaus conjecture, Prime Graph Question, integral group rings, blocks of Klein four defect, unit groups}
\subjclass[2010]{16U60, 20C05, 20C11}
\title{Units in group rings and blocks of Klein four or dihedral defect}
\author{Florian Eisele}
\address{Department of Mathematics, University of Manchester, Oxford Road, Manchester, M13 9PL, United Kingdom.}
\email{florian.eisele@manchester.ac.uk}
\author{Leo Margolis}
\address{Universidad Auton\'oma de Madrid, Departamento de Matem\'aticas, Facultad de Ciencias, Campus Cantoblanco, 28049 Madrid, Spain.}
\email{leo.margolis@icmat.es}
\renewcommand{\leq}{\leqslant}
\renewcommand{\geq}{\geqslant}
\begin{document}

\maketitle

\begin{abstract}
We obtain restrictions on units of even order in the integral group ring $\mathbb{Z}G$ of a finite group $G$ by studying their actions on the reductions modulo $4$ of lattices over the $2$-adic group ring $\mathbb{Z}_2G$. This improves the ``lattice method'' which considers reductions modulo primes $p$, but is of limited use for $p=2$ essentially due to the fact that $1\equiv -1 \ (\textrm{mod }2)$. Our methods yield results in cases where $\Z_2 G$ has blocks whose defect groups are Klein four groups or dihedral groups of order $8$. This allows us to disprove the existence of units of order $2p$ for almost simple groups with socle $\PSL(2,p^f)$ where $p^f\equiv \pm 3 \ (\textrm{mod } 8)$ and to answer  the Prime Graph Question affirmatively for many such groups. 
\end{abstract}

\section{Introduction}

The structure of the unit group of the integral group ring $\mathbb{Z}G$ of a finite group $G$ has been studied extensively since the 1950's (see \cite{Sehgal93, MargolisDelRioSurvey} for surveys of the area). Many of the strongest questions turned out to have negative answers, among them the Zassenhaus conjecture which was the strongest conjectural assertion made about individual torsion units \cite{EiseleMargolisZassenhaus}. Yet there still are observed properties of the unit group that remain mysterious. This concerns in particular the arithmetic properties of units of finite order. Denote by $V(\mathbb{Z}G)$ the group of units of augmentation $1$ in $\mathbb{Z}G$, also known as normalized units, i.e. the units coefficient sum $1$.\\

\paragraph*{\textbf{Prime Graph Question}} Let $G$ be a finite group. If $u \in V(\mathbb{Z}G)$ has order $pq$, for $p$ and $q$ primes, does it follow that $G$ contains an element of order $pq$?\\

In contrast to other problems in the field, there is a reduction theorem for the Prime Graph Question. Namely, it has a positive answer for a group $G$ if it has a positive answer for all almost simple quotients of $G$ \cite[Theorem 2.1]{KimmerleKonovalov}. Recall that a group $G$ is called \textit{almost simple} if there is a non-abelian simple group $S$ such that $S \leq G \leq \Aut(S)$. In this case $S$ is known as the \textit{socle} of $G$. 

Hence there is hope for a positive answer to the Prime Graph Question using the classification of finite simple groups. Some steps in this direction have been taken, the first one being a proof for groups of the form $\PSL(2,p)$ with $p$ a prime \cite{LutharPassi, HertweckBrauer}. More recent results answered the Prime Graph Question affirmatively for almost simple groups whose socle is an alternating group \cite{BaechleMargolisAn}, for most sporadic groups and some particular series of simple groups of Lie type \cite{CaicedoMargolis}. Nevertheless, much remains to be done and the present paper is a contribution to the study of this question. Namely, we prove:

\begin{theorem}\label{th:Order2p}
Let $q$ be a power of a prime $p$ and let $G$ be an almost simple group with socle $\PSL(2,q)$ such that $q\equiv \pm 3 \ (\textrm{mod } 8)$. Then $V(\mathbb{Z}G)$ contains no unit of order $2p$.  
\end{theorem}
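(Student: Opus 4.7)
The plan is to derive a contradiction from the assumption that $u \in V(\Z G)$ has order $2p$. Decompose $u = u_2 u_p$, where $u_2 = u^p$ is an involution, $u_p = u^{p+1}$ has order $p$, and the two factors commute. Since $q \equiv \pm 3 \pmod 8$, a Sylow $2$-subgroup of $\PSL(2,q)$ is a Klein four group, and in any almost simple extension $G$ of $\PSL(2,q)$ the Sylow $2$-subgroup is a Klein four group or a dihedral $2$-group of moderate order; in particular, the $2$-blocks of $\Z_2 G$ that one must examine all have Klein four or dihedral defect, precisely the regime to which the new lattice method of this paper applies.

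First I would restrict the partial augmentations of $u$ by classical means. By Hertweck's extension of the Berman--Higman theorem, $\varepsilon_x(u) = 0$ whenever $\mathrm{ord}(x)$ does not divide $2p$, and also $\varepsilon_1(u) = 0$; since $G$ contains no element of order $2p$, the only conjugacy classes that can support a nonzero partial augmentation of $u$ are the involution classes and the nontrivial $p$-element classes. Applying the Luthar--Passi / HeLP inequalities to the known ordinary character tables of $\PSL(2,q)$ and its almost simple extensions, together with the prior analysis of the prime-order subfactors $u_2$ and $u_p$, reduces the candidate partial augmentation tuples for $u$ to an explicit finite list.

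The heart of the proof is the mod $4$ lattice method. For each surviving candidate, I pick a $\Z_2 G$-lattice $L$ in a block of Klein four or dihedral defect and examine the action of $u$ on $L/4L$. From $u_2^2 = 1$ one has $(u_2 - 1)^2 = -2(u_2 - 1)$, so modulo $4$ the endomorphism $u_2 - 1$ fits into a very restricted Jordan shape whose block sizes are governed by the partial augmentations of $u_2$. The $\F_2 G$-module $L/2L$ is controlled by Erdmann's classification of tame blocks in the Klein four / dihedral defect case, and this controls the possible lifts of the $u_2$-action from $L/2L$ to $L/4L$. The commuting semisimple factor $u_p$ must stabilise these pieces, its decomposition into $p$-th root eigenspaces being dictated by its own partial augmentations, and this forces rigid compatibility conditions between the $u_2$- and $u_p$-data on $L/4L$ that I expect to fail for each remaining candidate tuple.

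The main obstacle is carrying out this last step uniformly. The character table of $\PSL(2,q)$, the $2$-local block structure of $G$, and the set of HeLP-compatible tuples all depend on $q \bmod 4$, on the parity of $f$, and on the specific almost simple extension; in each case one must construct a lattice $L$ whose mod $4$ reduction is sharp enough to expose the inconsistency, and orchestrating this interplay between the $2$-modular block theory and the arithmetic of partial augmentations is the delicate technical core of the argument.
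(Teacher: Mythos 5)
The proposal correctly identifies the high-level strategy of the paper --- the $p$-th root of unity condition $q\equiv\pm 3\pmod 8$ puts one in the regime of Klein four / dihedral defect groups, and the innovation is to reduce lattices mod $4$ rather than mod $2$ --- but it leaves the actual argument as a program to be carried out and misses the mechanism that makes the paper's proof work. Three concrete gaps stand out.

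First, the paper does not attempt to analyse the action of $u$ on a full $\Z_2 G$-lattice $L/4L$ with the commuting pieces $u_2$ and $u_p$ intertwining on it. Instead it first projects onto a $\zeta_p$-eigenspace of $u^2$ using the idempotent $e=\tfrac{1}{p}\sum_i u^{2i}\zeta^{-2i}$, and then works entirely inside $eRGbe$, which is a tiny $R$-order in a product of matrix algebras over $K$. The lattices become $R[u^p]\cong RC_2$-lattices, so only the involution $u_2=u^p$ acts, and the $p$-part has been absorbed into the choice of $e$. Your plan of tracking the $u_p$-eigenspaces directly on $L/4L$ and imposing compatibility with $u_2$ is much harder to control and is not how the interaction between the two factors is exploited.

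Second, your proposed lever --- that Erdmann's classification of tame blocks controls $L/2L$ and hence the lifts to $L/4L$ --- does not supply what is needed. Knowing $L/2L$ as an $FG$-module does not determine the mod $4$ structure; that is precisely why the paper needs the stronger, arithmetic input. What it actually uses is a Plesken-style computation of $eRGbe$ via the symmetrizing form $T=\tfrac{1}{|G|}\chi_{\mathrm{reg}}$ (Propositions~\ref{prop structure v4 block} and \ref{prop structure block d8}), giving explicit generators like $(1,1,1),(2,4,0),(4,0,0)$. These congruence conditions are what yield a short exact sequence of $\bar R[u^p]$-modules, which by the lifting lemma (Lemma~\ref{lemma ses lifts}, a corollary of Maranda's theorem) produces a direct-sum decomposition of $K[u^p]$-modules, and thus the precise multiplicity identity $\mu(\zeta_p,u,\chi_1)=\mu(\zeta_p,u,\chi_2)$ of Corollary~\ref{cor:D8Defect}. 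Nothing in your sketch produces an identity of this kind.

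Third, your final step --- enumerate HeLP-compatible partial-augmentation tuples and check each against the constraint --- is explicitly left open (``I expect to fail''), whereas the paper's contradiction is uniform and short: feed the multiplicity identity into Luthar--Passi (Lemma~\ref{lem:FirstLemmaApplications}), use that every ordinary character $\alpha$ satisfies $\alpha(g)\equiv\alpha(1)\pmod 2$ for $g$ an involution, and collide this with the congruence $\varepsilon_x(u)+\varepsilon_y(u)\equiv 0\pmod 2$ of Lemma~\ref{lem:Congruence}. Also note that for the theorem as stated the only case that actually needs to be treated is the dihedral one, since one reduces immediately to $G=\Aut(\PSL(2,q))$, whose Sylow $2$-subgroup is $D_8$; your remark that both Klein four and dihedral regimes must be handled is true for the general Proposition~\ref{prop:KleinFourDefect} but not for Theorem~\ref{th:Order2p} itself.
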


This has the following consequence for the Prime Graph Question:


\begin{theorem}\label{th:ApplicationPQ}
Let $G$ be an almost simple group with socle $\PSL(2,q)$ such that $q\equiv \pm 3 \ (\textrm{mod } 8)$. If $q$ is a prime or the odd part of $(q-1)(q+1)$ is square free, then the Prime Graph Question has a positive answer for $G$.
\end{theorem}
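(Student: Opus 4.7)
The plan is to deduce Theorem \ref{th:ApplicationPQ} by combining Theorem \ref{th:Order2p} with previously established techniques for the Prime Graph Question applied to almost simple groups with socle $\PSL(2,q)$. The Prime Graph Question asks, for every pair of distinct primes $\{r,s\}$, whether $V(\mathbb{Z}G)$ containing a torsion unit of order $rs$ forces $G$ to contain an element of order $rs$. Equivalently, I must exclude torsion units of order $rs$ from $V(\mathbb{Z}G)$ for every pair $\{r,s\}$ such that $G$ has no element of that order.

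First I would enumerate the relevant prime pairs. The primes dividing $|G|$ are $p$, $2$, the odd prime divisors of $(q^2-1)$, and divisors of $|G/\PSL(2,q)|$, which divides $2f$ for $q = p^f$. The element orders in $\PSL(2,q)$ are governed by three tori (unipotent of exponent $p$, split of order $(q-1)/2$, and non-split of order $(q+1)/2$), so the prime graph of $\PSL(2,q)$ has no edge between two primes lying in distinct tori; because $q\equiv\pm 3\pmod 8$ the Sylow $2$-subgroup is Klein four. Under either hypothesis of the theorem, the Sylow $r$-subgroup of $G$ is cyclic of prime order for every odd prime $r\neq p$, and $|G/\PSL(2,q)|$ is small, which strongly restricts the possible partial augmentations of a torsion unit involving such a prime.

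The pair $\{2,p\}$ is exactly the content of Theorem \ref{th:Order2p}. For pairs $\{r,s\}$ with $rs$ coprime to $p$, the plan is to invoke the HeLP method on the ordinary character table of $G$ to exclude problematic units, following the strategy of \cite{HertweckBrauer, BaechleMargolisAn, CaicedoMargolis}. For pairs $\{p,r\}$ with $r\neq p$ odd, the centralizer of a $p$-element in $G$ is a $p$-group, so $G$ has no element of order $pr$, and units of such order are ruled out via the HeLP method applied to the Brauer character table in characteristic $r$, again taking decisive advantage of the cyclicity of the Sylow $r$-subgroups.

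The main obstacle I anticipate is verifying that the existing HeLP-type arguments cover every almost simple $G$ with socle $\PSL(2,q)$ under our hypotheses, not only $\PSL(2,q)$ itself: outer automorphism contributions from diagonal and field automorphisms introduce new conjugacy classes and modular characters in $G$, and the partial augmentation inequalities must remain decisive in this larger context. The two alternative hypotheses of the theorem are precisely tailored to contain this complication: when $q$ is prime one has $G\in\{\PSL(2,q),\PGL(2,q)\}$ and the Prime Graph Question is on record through refinements of \cite{HertweckBrauer}; when the odd part of $(q^2-1)$ is square-free, all odd Sylow subgroups of $G$ remain cyclic of prime order, which is exactly the regime in which HeLP on the automorphic extensions of $\PSL(2,q)$ is effective.
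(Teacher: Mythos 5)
Your overall decomposition into prime pairs is the right one, and the pair $\{2,p\}$ is indeed handled exactly as you say, by Theorem~\ref{th:Order2p}. But several gaps separate your sketch from a complete proof, and they are not merely cosmetic.

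First, you assert that under the square-free hypothesis ``all odd Sylow subgroups of $G$ remain cyclic of prime order'', but this requires an argument you do not supply: a prime $r$ dividing the odd part of $(q-1)(q+1)$ could a priori also divide $f$, in which case the almost simple group $G$ could have Sylow $r$-subgroup of order $r^2$. The paper proves a small lemma to rule this out: if an odd prime $r$ divides both $p^f\pm 1$ and $f$, then writing $f=rm$ one gets $p^m\equiv\pm 1\pmod r$ and hence $p^{rm}\mp 1\equiv 0\pmod{r^2}$, contradicting square-freeness. This step is essential before you can invoke any ``Sylow subgroup has prime order'' argument for $G$ itself rather than for $\PSL(2,q)$.

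Second, you do not treat the primes that divide $|G|$ but not $|\PSL(2,q)|$ (possible divisors of $f$). The paper dispatches these via Proposition~\ref{prop:KimmerleKonovalovQuotient}, applied with $N=\PSL(2,q)$ and noting that $G/N$ is a subgroup of $C_2\times C_f$, for which the Prime Graph Question is trivial. Without this case your proof is incomplete.

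Third, and most substantively, for pairs $\{r,s\}$ other than $\{2,p\}$ you appeal to ``the HeLP method'', but HeLP alone is not what makes this work. The decisive black box is Theorem~\ref{th:CaicedoMargolis}, a consequence of the lattice method applied to blocks of defect $1$: if the Sylow $r$-subgroup of $G$ has order $r$, then $V(\mathbb{Z}G)$ has an element of order $rs$ iff $G$ does. This applies uniformly to every pair $\{r,s\}$ in which at least one member is an odd prime $\neq p$ dividing $|\PSL(2,q)|$, because such a prime has Sylow subgroup of order $r$ in $G$ by the lemma above. It is stated for arbitrary finite groups, which immediately resolves the worry you raise about covering all almost simple extensions of $\PSL(2,q)$ rather than just $\PSL(2,q)$ and $\PGL(2,q)$ --- there is no need to redo any HeLP computation on new conjugacy classes introduced by outer automorphisms. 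Describing this as HeLP (and in particular proposing to apply HeLP ``in characteristic $r$'' for the pairs $\{p,r\}$) misidentifies the mechanism; the whole point of the lattice method of \cite{BachleMargolisLattice} and \cite{CaicedoMargolis} is that HeLP alone is insufficient precisely for such units.

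Finally, when $q$ is prime the paper simply cites Proposition~\ref{prop:PSL2KnwonStuff}(1) (the case $f\leq 2$), which already settles the Prime Graph Question for all almost simple $G$ with socle $\PSL(2,q)$, not only for $G\in\{\PSL(2,q),\PGL(2,q)\}$; your reduction to those two groups is unnecessary and, in the almost simple setting with $q$ prime, not quite correct either (one must at least acknowledge that these are the only possibilities because $\Out(\PSL(2,p))\cong C_2$).
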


Note that the condition $q\equiv \pm 3 \ (\textrm{mod } 8)$ is equivalent to a Sylow $2$-subgroup of $\PSL(2,q)$ having order~$4$.
Most results on the Prime Graph Question and, more generally, torsion units in $V(\mathbb{Z}\PSL(2,q))$ are based either on a character theoretic approach known as the ``HeLP method'', or the ``lattice method'' introduced formally in \cite{BachleMargolisLattice}, but already present in \cite{HertweckA6}. Roughly speaking the ``lattice method'' works by taking a ``global'' unit, that is, a unit in $\mathbb{Z}G$, of order divisible by $p$, and then projecting it down to a block of the $p$-adic group ring $\mathbb{Z}_pG$. One can then derive restrictions on the eigenvalues of $u$ on the irreducible ordinary representations of $G$, and therefore its ``partial augmentations'', by reducing a lattice in the ordinary representation modulo $p$ and studying the action of the unit on the simple composition factors of this reduction. In all applications of this method so far $p$ was always assumed to be odd, essentially because $1$ and $-1$ cannot be distinguished modulo $2$. In the present paper we apply the idea of the lattice method for the first time in the situation where $p=2$ and the unit has even order. The key idea is to reduce the lattice not modulo $2$ but rather modulo $4$, which does however in practice require a much finer understanding of the structure of the block than in the odd prime case. 
The structure theory available for blocks with a Klein four defect group or a dihedral defect group of order $8$ comes to our aid here. For most blocks relevant to the Prime Graph Question it allows us to very explicitly describe the parts of the block on which the non-trivial $2'$-part of the unit in question acts non-trivially. This in turn gives us information about the action of the unit on the irreducible ordinary representations, leading to the conclusion that the unit either needs to be rationally conjugate to an element of $G$ or does not exist.

Among non-solvable groups the groups of the form $\PSL(2, p^f)$ are certainly the ones for which the units in $\Z G$ have been studied the most \cite{LutharPassi, WagnerDiplom, Bleher95, BleherHissKimmerle95, HertweckBrauer, HertweckA6, HertweckHoefertKimmerle, Gildea13, BaechleMargolisPSL2p3,MargolisPSL, delRioSerrano17, BachleMargolisLattice, BachleMargolis4primaryI, BachleMargolis4primaryII, MargolisdelRioSerrano}. They include the only non-abelian simple groups for which the Zassenhaus conjecture is known to hold (see \cite{MargolisDelRioSurvey} for an overview and \cite[Corollary 1.3]{EiseleMargolisDefect1} for a new result). Yet the only almost simple groups with socle $\PSL(2,p^f)$ for which the Prime Graph Question was known to have a positive answer were those where $f \leq 2$ or where $p=2$ and $f$ satisfies certain restrictions. In particular, there were no positive results for $p$ odd and $f\geq 3$. The main obstacle here were units of order divisible by $p$, the defining characteristic of the group, as highlighted already in the final questions in \cite{HertweckHoefertKimmerle}. Our Theorem~\ref{th:Order2p} makes headway on this problem. However, in some cases it remains to prove the non-existence of units of order $pr$ for certain odd primes $r$. This would remove the condition in Theorem~\ref{th:ApplicationPQ} that the odd part of $(q-1)(q+1)$ has to be square free.

\section{Basic facts and notation}

Throughout this article $G$ denotes an arbitrary finite group.
Given $n\in \N$ we write $\zeta_n$ to denote some primitive complex $n$-th root of unity. For a prime $p$ the $p$-adic integers are denoted $\mathbb{Z}_p$. Throughout the article $R$ denotes an unramified extension of $\Z_2$, and $K$ denotes its field of fractions. Set $F=R/2R$. We are interested in reductions of lattices modulo $4$, so we set $\bar R=R/4R$. In the same vein, if $L$ is an $R$-lattice or an $RG$-lattice we define $\bar L=L/4L$.

\subsection{Torsion units in integral group rings}\label{section:torsionunits}


For $x \in G$ we denote by $x^G$ the conjugacy class of $x$ in $G$. Moreover, for $u = \sum_{g \in G}z_g g \in \mathbb{Z}G$ and $x \in G$ we let 
\[\varepsilon_x(u) = \sum_{g \in x^G} z_g \]
be the \emph{partial augmentation} of $u$ at $x$. Partial augmentations are a key notion in the study of torsion units. The following well-known facts will be used without further mention.

\begin{theorem}\label{th:pAsofTorsionUnits}
Let $u \in \V(\mathbb{Z}G)$ be of finite order $n$.
\begin{itemize}
\item[(i)] If $u \neq 1$, then $\varepsilon_1(u) = 0$ (Berman-Higman Theorem) \cite[Proposition 1.5.1]{GRG1}.
\item[(ii)] Let $g\in G$ such that $\varepsilon_g(u) \neq 0$. Then the order of $g$ divides $n$ \cite[Proposition 2.2]{HertweckBrauer}.
\end{itemize}
\end{theorem}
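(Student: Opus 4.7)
The plan is to prove both statements by computing traces of $u$ on well-chosen representations of $G$.

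For (i), I would apply the left regular representation $\rho\colon \C G \to \End_{\C}(\C G)$. Since $u$ has finite order, $\rho(u)$ is diagonalisable with eigenvalues among the $n$-th roots of unity, so $|\Tr \rho(u)| \leq |G|$ by the triangle inequality. On the other hand, expanding $u = \sum_g z_g g$ and noting that multiplication by any $g\neq 1$ permutes the basis of $\C G$ without fixed points yields $\Tr\rho(u) = |G|\cdot \varepsilon_1(u)$. Hence $|\varepsilon_1(u)| \leq 1$, and since $\varepsilon_1(u)\in\Z$ it lies in $\{-1, 0, 1\}$. Equality $\varepsilon_1(u) = \pm 1$ would saturate the triangle inequality, forcing all eigenvalues of $\rho(u)$ to coincide at a single root of unity $\zeta$; but then $u = \zeta\cdot 1$ in $\C G$, and augmentation $1$ gives $\zeta = 1$ and $u = 1$, contradicting $u \neq 1$. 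So $\varepsilon_1(u) = 0$.

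For (ii), the strategy is prime by prime: show that for every prime $p$ the $p$-part of $|g|$ divides the $p$-part of $n$. First suppose $p\nmid n$, so $u$ is a $p'$-torsion unit of $\V(\Z_p G)$. I would then compare two expressions for ordinary character values of $u$: the direct formula $\chi(u) = \sum_g \varepsilon_g(u)\chi(g)$ summed over all classes, and the Brauer-character expansion $\chi(u) = \sum_\varphi d_{\chi\varphi}\varphi(u)$ in which each $\varphi(u)$ depends only on partial augmentations at $p$-regular classes. Playing these against each other forces the partial augmentations at $p$-singular classes to vanish. When $p\mid n$, one instead works with the $p'$-part $u_{p'}$ of $u$ inside $\V(\Z_p G)$ and relates its partial augmentations to those of $u$; this is the shape of Hertweck's argument cited in the statement.

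The substantive obstacle is (ii): part (i) collapses to a single trace computation, but (ii) rests on the finer interplay between ordinary and Brauer characters together with $p$-modular reduction. In particular, extracting the sharp $p$-part divisibility when $p$ divides $n$ requires the $p'$-decomposition of $u$ in the $p$-adic group ring and careful tracking of how partial augmentations behave under this decomposition, which is why the one-line argument of (i) does not generalise directly.
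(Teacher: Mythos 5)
Part (i) is correct and complete: this is the classical Berman--Higman argument via the trace of the regular representation, and it is exactly what the cited reference gives. The only thing worth adding is that the saturation case gives $\rho(u)=\zeta I$ with $\zeta$ a root of unity; since $u\in\Z G$, necessarily $\zeta\in\{\pm 1\}$, and the augmentation condition then forces $\zeta=1$. But that is a cosmetic point; your reasoning is sound.

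Part (ii), however, is an outline rather than a proof, and the step you describe as the ``substantive obstacle'' is in fact the entire content of the cited result of Hertweck. For a torsion unit $u$ of order prime to $p$ acting on a simple $\F_p G$-module with irreducible Brauer character $\varphi$, the quantity $\varphi(u)$ (defined by lifting eigenvalues via the Teichm\"uller map) is well-defined, but the identity
\begin{equation}
\varphi(u) \;=\; \sum_{x\ p\text{-regular}} \varepsilon_x(u)\,\varphi(x),
\end{equation}
where the sum runs only over $p$-regular class representatives, is not something one obtains by simply juxtaposing the two expansions of $\chi(u)$. What comes for free is only the congruence of $\varphi(u)$ modulo $p$ with a sum over \emph{all} classes, which is vacuous for your purposes. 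The displayed identity is equivalent to the statement that $u$ is conjugate, by a unit of $\Q_p G$ normalizing $\Z_p G$, to an element of $\Z_p G$ supported on $p$-regular classes -- and establishing this is precisely the technical heart of Hertweck's argument, not a bookkeeping step. Once that is granted, comparing $\chi(u)=\sum_g\varepsilon_g(u)\chi(g)$ with $\chi(u)=\sum_\varphi d_{\chi\varphi}\varphi(u)$ and the ordinary-to-Brauer relation $\chi(g)=\sum_\varphi d_{\chi\varphi}\varphi(g)$ for $p$-regular $g$ does give $\sum_{g\ p\text{-singular}}\varepsilon_g(u)\chi(g)=0$ for all $\chi$, whence $\varepsilon_g(u)=0$ for $p$-singular $g$ by column orthogonality. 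So the endgame is right, but the opening move is presupposed. The case $p\mid n$ is likewise only gestured at: relating the partial augmentations of $u$ to those of its $p$-part $u_p=u^{n/p^{v_p(n)}}$ inside $V(\Z_p G)$ is a second nontrivial ingredient that you would need to state and justify to obtain the full divisibility $|g|\mid n$.
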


There are some known congruences for the partial augmentations of torsion units (which often hold more generally, cf. \cite{BovdiMaroti}). We will need the following special form:  
 
\begin{lemma}\label{lem:Congruence}\cite[Lemma 2.2]{BaechleMargolisAn}
Let $p$ be a prime and $u \in V(\mathbb{Z}G)$ a torsion unit of order different from $p$. Let $g_1,\ldots,g_m$ be representatives of the conjugacy classes of elements of order $p$ in $G$. Then 
\[\sum_{i=1}^m \varepsilon_{g_i}(u) \equiv 0 \ (\textrm{mod } p). \]
\end{lemma}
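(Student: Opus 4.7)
The plan is to reduce $u$ modulo $p$ and track the coefficient of the trivial conjugacy class in the cocenter $\mathbb{F}_pG / [\mathbb{F}_pG, \mathbb{F}_pG]$, exploiting a characteristic-$p$ ``freshman's dream'' identity. We may assume $u \neq 1$ (otherwise every $\varepsilon_{g_i}(u)$ vanishes trivially). Write $u = \sum_g z_g g$ and denote by $\bar u$ its image in $\mathbb{F}_pG$.

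In any associative ring $R$ of characteristic $p$, grouping length-$p$ monomials by cyclic rotation (the orbits have size $1$ or $p$) yields
\[
(\alpha_1 + \cdots + \alpha_k)^p \;\equiv\; \alpha_1^p + \cdots + \alpha_k^p \pmod{[R,R]}.
\]
Applying this in $R = \mathbb{F}_pG$ with $\alpha_g = \bar z_g g$, and using Fermat's little theorem $\bar z_g^p = \bar z_g$, I obtain
\[
\bar u^{\,p} \;\equiv\; \sum_{g\in G}\bar z_g\, g^p \pmod{[\mathbb{F}_pG, \mathbb{F}_pG]}.
\]
Because the cocenter has the set of class sums as an $\mathbb{F}_p$-basis, and because every element of a fixed class $x^G$ raises to a power lying in the single class $(x^p)^G$, reading off the coefficient of each class $y^G$ produces the congruence
\[
\bar\varepsilon_y(u^p) \;\equiv\; \sum_{\{x^G \,:\, x^p \in y^G\}} \bar\varepsilon_x(u) \pmod p.
\]

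Now specialize to $y = 1$: the classes satisfying $x^p = 1$ are exactly the trivial class and the classes $g_1^G,\dots,g_m^G$ of elements of order $p$. The Berman--Higman theorem applied to $u$ gives $\varepsilon_1(u) = 0$; and since the hypothesis that $u$ has order different from both $1$ and $p$ forces $u^p \neq 1$, the same theorem applied to $u^p$ also gives $\varepsilon_1(u^p) = 0$. The congruence above therefore collapses to $\sum_{i=1}^m \varepsilon_{g_i}(u) \equiv 0 \pmod p$, which is the claim.

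The only mildly delicate step is the ``freshman's dream modulo commutators'' in a non-commutative ring. This is a standard one-line observation: the difference of two cyclic rotations of a length-$p$ monomial is a single additive commutator, so each non-constant cyclic orbit of monomials sums to a multiple of $p$ in the cocenter and hence vanishes in characteristic $p$. Everything else in the argument is direct bookkeeping.
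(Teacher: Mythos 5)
The paper cites this lemma from the reference \cite{BaechleMargolisAn} without reproducing a proof, so there is no in-paper argument to compare against. Your proof is correct and is in fact the standard argument for this congruence: the ``freshman's dream'' in $\mathbb{F}_pG$ modulo $[\mathbb{F}_pG,\mathbb{F}_pG]$ (valid because $p$ is prime, so nonconstant length-$p$ monomials fall into cyclic orbits of size exactly $p$, each contained in a single coset of the commutator subspace) yields the well-known congruence $\varepsilon_y(u^p)\equiv\sum_{\{x^G:\, x^p\in y^G\}}\varepsilon_x(u)\pmod p$, and specializing to $y=1$ together with Berman--Higman applied to both $u$ and $u^p$ --- which, as you note, are both nontrivial once the degenerate case $u=1$ is discarded --- gives the statement.
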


If $\alpha: G \rightarrow \GL(n,K)$ is a representation of $G$ over $K$, then $\alpha$ extends to a ring homomorphism $\mathbb{Z}G \rightarrow M_n(K)$ which in turn restricts to a group homomorphism $V(\mathbb{Z}G) \rightarrow \GL(n,K)$. By abuse of notation we will also denote this latter group homomorphism by $\alpha$. Now if $u \in V(\mathbb{Z}G)$ has order $n$, then the order of $\alpha(u)$ is a divisor of $n$. Since $K$ is a field of characteristic $0$, the matrix $\alpha(u)$ is diagonalizable over an algebraic closure of $K$, with $n$-th roots of unity as its eigenvalues. If $\chi$ is the character belonging to $\alpha$ and $\zeta$ is an $n$-th root of unity in an algebraic closure of $K$, we will let $\mu(\zeta, u, \chi)$ denote the multiplicity of $\zeta$ as an eigenvalue of $\alpha(u)$.

The following formula due to Luthar and Passi is very useful to calculate multiplicities of eigenvalues of torsion units.

\begin{proposition}\cite{LutharPassi}\label{pr:luthar-passi-multiplicity-formula}
   Let $u$ in $\V(\mathbb Z G)$ be of order $n$, denote by $\zeta\in \mathbb C$ an $n$-th root of unity and let $\chi$ be an ordinary character of $G$. Then 
   \[
       \mu(\zeta, u, \chi) =\frac{1}{n}\sum_{d|n} {\rm Tr}_{\mathbb Q(\zeta^d)/\mathbb Q} (\chi(u^d)\zeta^{-d})
   \] 
\end{proposition}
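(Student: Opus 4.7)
The plan is to treat $\alpha(u)$ as a diagonalizable matrix (it has finite order and $K$ has characteristic zero), extract its spectrum via discrete Fourier analysis on the cyclic group $\Z/n\Z$, and then collapse the resulting sum into Galois traces over cyclotomic subfields.

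First I would fix a primitive $n$-th complex root of unity $\zeta_n$ and write $\zeta=\zeta_n^j$. Setting $\mu_k:=\mu(\zeta_n^k,u,\chi)$, diagonalizability of $\alpha(u)$ gives $\chi(u^d)=\sum_{k=0}^{n-1}\mu_k\,\zeta_n^{kd}$ for every integer $d$. The orthogonality relation $\sum_{d=0}^{n-1}\zeta_n^{d(k-j)}=n\,\delta_{k,j}$ then yields the ``flat'' Fourier inversion
\[
\mu(\zeta,u,\chi)=\frac{1}{n}\sum_{d=0}^{n-1}\chi(u^d)\,\zeta^{-d}.
\]

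Next I would regroup the indices $d\in\{0,1,\ldots,n-1\}$ according to $e:=\gcd(d,n)$, which ranges over the divisors of $n$. Writing $d=er$ with $r\in(\Z/(n/e)\Z)^\times$ and setting $v:=u^e$, both $\chi(v)$ and $\zeta^{-e}$ lie in $\mathbb{Q}(\zeta_{n/e})$ because $v$ has order $n/e$. If $\sigma_r\in\Gal(\mathbb{Q}(\zeta_{n/e})/\mathbb{Q})$ denotes the automorphism sending $\zeta_{n/e}\mapsto\zeta_{n/e}^r$, then $\chi(v^r)\,\zeta^{-er}=\sigma_r(\chi(v)\,\zeta^{-e})$ holds eigenvalue-by-eigenvalue, since the eigenvalues of $\alpha(v^r)=\alpha(v)^r$ are the $r$-th powers of those of $\alpha(v)$. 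Summing over $r$ collapses the inner sum into $\Tr_{\mathbb{Q}(\zeta_{n/e})/\mathbb{Q}}(\chi(u^e)\,\zeta^{-e})$; summing over $e\mid n$ and reindexing $e\leadsto d$ produces the stated formula, using $\mathbb{Q}(\zeta^d)=\mathbb{Q}(\zeta_{n/d})$ when $\zeta$ is a primitive $n$-th root.

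The only genuinely delicate step is the equality $\chi(v^r)=\sigma_r(\chi(v))$: it rests on diagonalizability of $\alpha(v)$ together with the fact that $\sigma_r$ acts on $(n/e)$-th roots of unity by the $r$-th power map. If $\zeta$ fails to be a primitive $n$-th root, then $\mathbb{Q}(\zeta^d)$ may be properly contained in $\mathbb{Q}(\zeta_{n/d})$ for some $d\mid n$, and the orbit-count $[\mathbb{Q}(\zeta_{n/d}):\mathbb{Q}(\zeta^d)]$ must be tracked to reconcile the ``large'' trace $\Tr_{\mathbb{Q}(\zeta_{n/e})/\mathbb{Q}}$ produced by the derivation with the $\Tr_{\mathbb{Q}(\zeta^d)/\mathbb{Q}}$ written in the statement. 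Beyond this careful bookkeeping, the argument is a routine application of character orthogonality.
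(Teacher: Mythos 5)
The paper gives no proof of this proposition; it is cited directly from Luthar--Passi. Your reconstruction (discrete Fourier inversion on $\Z/n\Z$, grouping indices by $e=\gcd(d,n)$, and collapsing each residue class to a Galois trace) is the standard and correct derivation, so on the substance you have filled the gap properly.

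One remark on the ``delicate step'' you flag at the end. The worry about non-primitive $\zeta$ and an orbit-count correction is not actually needed, and as phrased it even points in a slightly misleading direction: if $\zeta$ is non-primitive then $\chi(u^d)\zeta^{-d}$ need not lie in $\mathbb{Q}(\zeta^d)$ at all, so $\Tr_{\mathbb{Q}(\zeta^d)/\mathbb{Q}}$ would be undefined rather than merely off by an index. What your grouping-by-gcd computation actually produces, for every $n$-th root of unity $\zeta$ (primitive or not), is
\[
\mu(\zeta,u,\chi)=\frac{1}{n}\sum_{d\mid n}\Tr_{\mathbb{Q}(\zeta_{n/d})/\mathbb{Q}}\bigl(\chi(u^d)\,\zeta^{-d}\bigr),
\]
since $\chi(u^d)$ and $\zeta^{-d}$ both lie in $\mathbb{Q}(\zeta_{n/d})$ and the sum over $r\in(\Z/(n/d)\Z)^\times$ is exactly the trace from that field. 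This is the form the paper itself uses (compare the explicit $\Tr_{\mathbb{Q}(\zeta_p)/\mathbb{Q}}$, $\Tr_{\mathbb{Q}(\zeta_q)/\mathbb{Q}}$, $\Tr_{\mathbb{Q}(\zeta_{pq})/\mathbb{Q}}$ in the proof of Lemma~\ref{lem:FirstLemmaApplications}), so $\mathbb{Q}(\zeta^d)$ in the statement should simply be read as $\mathbb{Q}(\zeta_{n/d})$. With that reading the two are literally identical when $\zeta$ is primitive, and your derivation already yields the general case with no further bookkeeping.
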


Note that for ordinary characters, their values on torsion units are connected to partial augmentations by the obvious formula
\[\chi(u) = \sum_{g \in \mathcal{C}} \varepsilon_g(u) \chi(g), \]
where $\chi$ denotes any ordinary character and $\mathcal{C}$ a set of representatives of the conjugacy classes in $G$. We will use this formula without further mention.

The next classical result on orders of torsion units allows us to formulate some of the results in a more general way.

\begin{lemma}\label{lem:CohnLivingstone}\cite{CohnLivingstone}
If $u \in V(\mathbb{Z}G)$ has order $n$, then $n$ divides the exponent of $G$.
\end{lemma}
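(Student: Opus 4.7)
The plan is to work prime by prime and to play the natural cyclotomic field of $n$ off against that of $\exp(G)$ at the level of character values. First, I would reduce to the prime-power case: writing $n = \prod_{p \mid n} p^{a_p}$, the element $u^{n/p^{a_p}} \in \V(\Z G)$ has order exactly $p^{a_p}$, and if each such prime power divides $\exp(G)$ then so does their least common multiple $n$. Hence I may assume $u$ has order $p^a$ for a single prime $p$; denoting by $p^b$ the $p$-part of $\exp(G)$, the goal becomes $a \leq b$.

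Next comes the key Galois constraint. For any ordinary irreducible character $\chi$ of $G$ and any $k \geq 0$, the value $\chi(u^k)$ is on the one hand a sum of $p^a$-th roots of unity, so lies in $\Q(\zeta_{p^a})$, and on the other hand equals $\sum_g \varepsilon_g(u^k)\chi(g)$, a $\Q$-linear combination of character values $\chi(g)$ for $g\in G$, each of which lies in $\Q(\zeta_{\exp G})$. Intersecting the two cyclotomic fields gives $\Q(\zeta_{p^a}) \cap \Q(\zeta_{\exp G}) = \Q(\zeta_{p^b})$, so $\chi(u^k) \in \Q(\zeta_{p^b})$ for every $\chi$ and every $k$.

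Suppose for contradiction that $a > b$, and pick an irreducible $\chi$ whose representation $\alpha$ satisfies that $\alpha(u)$ has order exactly $p^a$ (such $\chi$ exists because $u$ acts faithfully on the regular representation). Then $\alpha(u)$ admits an eigenvalue $\zeta$ of order $p^a$. Applying Proposition~\ref{pr:luthar-passi-multiplicity-formula} and then a non-trivial $\sigma \in \Gal(\Q(\zeta_{p^a})/\Q(\zeta_{p^b}))$, while using that each $\chi(u^k)$ is $\sigma$-fixed, yields $\mu(\sigma(\zeta), u, \chi) = \mu(\zeta, u, \chi)$: eigenvalues of $\alpha(u)$ in the same Galois orbit occur with equal multiplicity. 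A direct geometric-series calculation shows that the orbit sum $\sum_{\lambda \in O}\lambda^k$ vanishes for every non-singleton orbit $O$ whenever $\gcd(k, p) = 1$, giving the clean identity
\[
\chi(u^k) \;=\; \sum_{\substack{\lambda\text{ eigenvalue of }\alpha(u)\\ o(\lambda)\mid p^b}} \mu(\lambda, u, \chi)\,\lambda^k \qquad (\gcd(k,p) = 1).
\]

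The hard part will be to convert this into an outright contradiction with the existence of an eigenvalue of order $p^a$. The natural strategy is to conclude, via the identity above applied to all generators of $\langle u\rangle$ together with a cascade down through $u, u^p, \ldots, u^{p^{b-1}}$ (each handled by the same Galois argument relative to $\Q(\zeta_{p^{b-j}})$), that in $\chi|_{\langle u\rangle}$ the multiplicity of every character of order exceeding $p^b$ is zero; this would force $\alpha(u)^{p^b} = I$ for every irreducible $\alpha$, hence $u^{p^b} = 1$ in $\C G$, contradicting $o(u) = p^a > p^b$. The main subtlety is that the $\phi(p^a)$ vanishing constraints from $k$ coprime to $p$ are not by themselves enough to force the $p^a - p^b$ high-order multiplicities to vanish, so the cascade must be set up carefully. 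The case $p = 2$ is especially delicate because $\Gal(\Q(\zeta_{2^a})/\Q(\zeta_{2^b}))$ is non-cyclic for $b \leq 1$ and the index formula $[\Q(\zeta_{2^c}):\Q(\zeta_{2^b})] = 2^{c-b}$ fails, so the orbit-sum calculation must be redone by hand for these exceptional values of $b$ -- an awkwardness at the prime $2$ which, in a more delicate form, is precisely what the mod-$4$ lattice refinement later in the paper is designed to overcome.
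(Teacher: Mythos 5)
The reduction to prime-power order is sound, and the Galois step is correct: indeed, from $\chi(u^k)\in\Q(\zeta_{p^b})$ for all $k$ one deduces, by Fourier inversion on $C_{p^a}$, that the multiplicity function $j\mapsto \mu(\zeta_{p^a}^j,u,\chi)$ is constant on orbits of $\Gal(\Q(\zeta_{p^a})/\Q(\zeta_{p^b}))$ acting on $\Z/p^a\Z$ by multiplication. Your orbit-sum identity is also correct (and the worry about $b=0$ is unnecessary: since $u^{p^{a-1}}$ has order $p$, the Luthar--Passi formula applied to the regular character gives $\mu(1,u^{p^{a-1}},\chi_{\mathrm{reg}})=|G|/p\in\Z$, forcing $p\mid |G|$ and so $b\geq 1$).

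However, the final ``cascade'' step is a genuine gap, and I do not believe it can be closed by the method you propose. Galois-orbit-constancy of multiplicities, equivalently $\chi(u^k)\in\Q(\zeta_{p^b})$ for all $k$, simply does not force the high-order multiplicities to vanish. Concretely, with $p=3$, $a=2$, $b=1$, $\zeta=\zeta_9$, the multiplicity pattern $m_1=m_4=m_7=1$ and $m_j=0$ otherwise (so $\chi(1)=3$) satisfies every one of your constraints: $\chi(u^k)=\zeta^k(1+\zeta^{3k}+\zeta^{6k})=0$ for $\gcd(k,3)=1$, and $\chi(u^{3k})=3\zeta_3^{k}\in\Q(\zeta_3)$; yet the eigenvalue $\zeta_9$ appears with multiplicity one. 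You already notice that the $\phi(p^a)$ vanishing conditions at $k$ coprime to $p$ are insufficient, but the further iterations $u\rightsquigarrow u^p\rightsquigarrow\cdots$ contribute nothing new: for $j\geq 1$ the unit $u^{p^j}$ has order $p^{a-j}$, so $\chi((u^{p^j})^k)\in\Q(\zeta_{p^{a-j}})\cap\Q(\zeta_{\exp G})=\Q(\zeta_{p^{\min(a-j,b)}})\subseteq\Q(\zeta_{p^b})$, which is a weaker constraint than what you already have at level $j=0$. So the cascade is idle, and the contradiction you want to land on ($\alpha(u)^{p^b}=I$ for all $\alpha$) is not reachable this way. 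The missing input has to come from somewhere beyond ``character values of $u$ are Galois-stable over $\Q(\zeta_{\exp G})$''; the original Cohn--Livingstone argument (which the paper merely cites) indeed uses a different device, and as it stands your proof outline does not establish the lemma.
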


For applications to almost simple groups we will also need the following.

\begin{proposition}\label{prop:KimmerleKonovalovQuotient}\cite[Proposition 2.2]{KimmerleKonovalov}
Let $N$ be a normal subgroup of a finite group $H$ and set $G = H/N$. Assume that the Prime Graph Question has a positive answer for $G$. Then for primes $p$ and $q$ such that $p$ does not divide the order of $N$, the unit group $V(\mathbb{Z}H)$ contains an element of order $pq$ if and only if $H$ contains an element of order $pq$.
\end{proposition}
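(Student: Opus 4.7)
The ``if'' direction is immediate: an element of order $pq$ in $H$ is automatically a unit of that order in $V(\Z H)$. For the converse, I would start from a hypothetical $u \in V(\Z H)$ of order $pq$, decompose it as $u = ab$ into its commuting $p$-part $a$ (of order $p$) and $q$-part $b$ (of order $q$), and study the image of this pair under the surjection $\pi \colon V(\Z H) \to V(\Z G)$ induced by $H \twoheadrightarrow G = H/N$.

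The first ingredient will be the standard fact that any torsion unit in $\ker \pi$ has order supported only on primes dividing $|N|$; this is essentially a consequence of the analysis of units in the augmentation ideal of a normal subgroup and the statement that such units are rationally conjugate to elements of $N$. Since $p \nmid |N|$, it forces $\pi(a)$ to have order exactly $p$, while $\pi(b)$ has order either $q$ or $1$. If $\pi(b)$ has order $q$, then $\pi(u) = \pi(a)\pi(b)$ has order $pq$, and the PQ hypothesis for $G$ produces $\bar h \in G$ of order $pq$. I would lift $\bar h$ by choosing any preimage $h \in H$ and writing $|h| = p^\alpha q^\beta m$ with $\gcd(m,pq)=1$; the containment $h^{pq} \in N$ together with $p \nmid |N|$ forces $\alpha = 1$, so $h^{q^{\beta-1}m}$ has order exactly $pq$ in $H$, completing this case.

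The main obstacle is the remaining case, $\pi(b) = 1$. Here $b$ is a torsion unit of order $q$ in $\ker \pi$, so it is rationally conjugate to some $x \in N$ of order $q$ (in particular $q \mid |N|$), and $\pi(u) = \pi(a)$ has order only $p$, so PQ for $G$ does not apply directly. My plan would be to exploit the rational conjugacy $b \sim x$ and the commutation $ab = ba$ to transport the pair $(a,b)$ into the centraliser $C_H(x)$: concretely, to produce a $\Q H$-conjugate of $u$ of the shape $a' x$ with $a' \in V(\Z C_H(x))$ of order $p$ commuting with $x$. Once this is achieved, Cohn--Livingstone (Lemma~\ref{lem:CohnLivingstone}) applied inside $C_H(x)$ yields a genuine $p$-element $y \in C_H(x)$, and then $yx$ is an element of $H$ of order $pq$. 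The delicate point --- converting the character-theoretic rational conjugacy $b \sim x$ into an element-level replacement that simultaneously preserves commutation with the $p$-part $a$ --- is where I expect the bulk of the difficulty to lie, and it is the step which in \cite{KimmerleKonovalov} relies on their more refined analysis.
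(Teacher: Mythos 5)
Since the paper only cites \cite[Proposition 2.2]{KimmerleKonovalov} without reproducing an argument, there is no internal proof to compare against; I will assess your plan on its own. The ``if'' direction, the decomposition into commuting $p$- and $q$-parts, and the treatment of the case $\pi(b)\neq 1$ (including the correct lifting of an order-$pq$ element of $G$ to one of order exactly $pq$ in $H$ via $p\nmid|N|$) are all fine. I also note that the auxiliary fact you use --- that the $p$-part of a torsion unit cannot die under $\pi$ when $p\nmid|N|$ --- should \emph{not} be justified by appealing to rational conjugacy of units in $1+\Delta(H,N)$ to elements of $N$. It has an elementary proof: if $v\in V(\Z H)$ has order $p$ and $\pi(v)=1$, take $\chi=\operatorname{Ind}_N^H 1_N$; then $\chi(v)=\chi(1)=|G|$ because $v$ acts as the identity on $\Q[H/N]$, while $\chi(v)=\sum_{|g|=p}\varepsilon_g(v)\chi(g)=0$ since no $H$-conjugate of an order-$p$ element lies in $N$ (as $p\nmid|N|$) and $\varepsilon_1(v)=0$ by Berman--Higman --- a contradiction.

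The genuine gap is in the case $\pi(b)=1$. There you invoke ``$b$ is rationally conjugate to some $x\in N$.'' This is a Zassenhaus-type statement for torsion units in $1+\Delta(H,N)$ and is \emph{not} an available theorem; the only general results of this flavour (Weiss et al.) require hypotheses such as $N$ being a $p$-group and $v$ a $p$-element, none of which hold here since all that is known is $q\mid|N|$. Without this, the plan of transporting the pair $(a,b)$ into a centraliser $C_H(x)$ does not get off the ground. This case genuinely occurs (already for $H=S_3\times C_2$, $N=C_3$, $(p,q)=(2,3)$, taking $u$ to be a group element of order $6$), so it cannot be waved away. You correctly identify this as the crux, but as written the proposal does not yield a proof of the proposition, and you should instead look at how \cite{KimmerleKonovalov} actually handle this case rather than hoping rational conjugacy to an element of $N$ comes for free.
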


The following is a consequence of the theory of blocks of defect 1 and the lattice method.

\begin{theorem}\label{th:CaicedoMargolis}\cite[Theorem 1.1]{CaicedoMargolis}
Let $G$ be a finite group, $p$ and $q$ primes and assume a Sylow $p$-subgroup of $G$ has order $p$. Then $V(\mathbb{Z}G)$ contains an element of order $pq$ if and only if $G$ contains an element of order $pq$.
\end{theorem}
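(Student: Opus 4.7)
My plan is to argue by contradiction: suppose $u \in V(\Z G)$ has order $pq$ while $G$ contains no element of order $pq$. By Theorem \ref{th:pAsofTorsionUnits}, the partial augmentations $\varepsilon_g(u)$ vanish unless $|g|$ divides $pq$; the Berman--Higman theorem forces $\varepsilon_1(u)=0$ and the contradiction hypothesis forces $\varepsilon_g(u)=0$ whenever $|g|=pq$. Since a Sylow $p$-subgroup of $G$ has order $p$, every $p$-singular element of $G$ has order exactly $p$. Letting $g_1,\dots,g_s$ and $h_1,\dots,h_t$ be representatives of the conjugacy classes of elements of order $p$ and $q$ respectively, we therefore get
\[
\sum_{i=1}^s \varepsilon_{g_i}(u)+\sum_{j=1}^t \varepsilon_{h_j}(u)=1,
\]
and Lemma \ref{lem:Congruence} constrains the first sum modulo $p$. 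Moreover, $u^q$ has order $p$, and a standard cyclic-defect analysis in the spirit of Hertweck strongly restricts (in favourable cases, pins down) the partial-augmentation vector of $u^q$ on the classes $g_i^G$.

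The heart of the argument is the lattice method applied to each block $B$ of $\Z_p G$. Because a Sylow $p$-subgroup is cyclic of prime order, every such block has defect $0$ or $1$. Defect-zero blocks add no new restriction beyond $p$-regular information, since their irreducible characters vanish on $p$-singular classes. For a defect-one block $B$, fix its Brauer tree: the ordinary characters in $\Irr(B)$ label the vertices, the simple $F G$-modules in $B$ label the edges, and for every $\chi\in\Irr(B)$ and $RG$-lattice $L$ affording $\chi$ the composition factors of $L/pL$ (with multiplicities) are read off from the walk on the tree around the vertex $\chi$. Defect-one projectives are uniserial, so this bookkeeping is unambiguous. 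On each such simple composition factor $S$, the unit $u$ acts through its $p'$-part $u_{p'}$, and the trace of that action is the Brauer character of $S$ evaluated at $u_{p'}$, which is a linear combination of the $\varepsilon_{h_j}(u)$. Combining this with the constraints on the $p$-part of $u$ extracted in Step~1 and substituting into Proposition \ref{pr:luthar-passi-multiplicity-formula} expresses each multiplicity $\mu(\zeta,u,\chi)$ as an explicit affine function of the partial augmentations of $u$.

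To conclude, one imposes non-negativity and integrality of $\mu(\zeta,u,\chi)$ over all pairs $(\chi,\zeta)$ with $\chi\in\Irr(B)$, $B$ a block of defect one, and $\zeta$ a $pq$-th root of unity. Together with the sum-to-one identity above and the mod-$p$ congruence of Lemma \ref{lem:Congruence}, this linear system should force the tuple $(\varepsilon_{g_i}(u),\varepsilon_{h_j}(u))$ to match the partial-augmentation profile of a genuine element of $G$ of order $pq$; the absence of any such element then produces the desired contradiction. I expect the main difficulty to lie in the block-by-block Brauer-tree step, namely identifying precisely which simple $FG$-modules appear in $L/pL$ and with what multiplicities, uniformly across all defect-one blocks, and then verifying that the resulting constraints are indeed tight enough to exclude ``exotic'' non-negative solutions supported only on classes of orders $p$ and $q$. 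The cyclic-defect hypothesis is what makes this tractable: it reduces the module category of each non-semisimple block to combinatorics on a tree, bypassing the subtle extension phenomena that make the higher-defect analogue (treated in the rest of this paper) so much harder.
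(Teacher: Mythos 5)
The paper does not prove this theorem---it is quoted verbatim from the cited reference \cite{CaicedoMargolis} and is accompanied only by the comment that it is ``a consequence of the theory of blocks of defect 1 and the lattice method.'' Your proposal correctly identifies that framework, so in broad strokes you are on the same road as the cited proof: defect-one blocks, Brauer tree combinatorics for composition factors of $L/pL$, the action of the $p'$-part of $u$ on those factors, and Luthar--Passi for bookkeeping.

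That said, what you have written is a plan, not a proof, and the gap is exactly at the step you flag yourself. You set up the linear system of constraints on $(\varepsilon_{g_i}(u),\varepsilon_{h_j}(u))$ and then assert that non-negativity and integrality of the $\mu(\zeta,u,\chi)$ ``should force'' a contradiction. But that last step is the HeLP method, and HeLP alone is known to be insufficient in general---the entire point of the lattice method is that it produces constraints strictly stronger than the non-negativity of eigenvalue multiplicities, because the decomposition of $L/pL$ into indecomposable $F[u]$-modules (with $u^q$ acting unipotently) must be compatible with the Brauer-tree-prescribed filtration, and this rules out partial-augmentation vectors that HeLP would allow. Your write-up partially conflates these two levels: you invoke the Brauer tree walk and the Brauer character of $u_{p'}$ on composition factors, but then fall back to ``impose non-negativity and integrality of $\mu(\zeta,u,\chi)$'' as the closing move. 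The actual argument in \cite{CaicedoMargolis} makes essential use of the module-theoretic constraint (which $F[u]$-indecomposables can occur, and in what multiplicities, given the Brauer tree), not merely of character-level non-negativity. Until that combinatorial step is carried out uniformly over all defect-one blocks and shown to be inconsistent with the assumption that $G$ has no element of order $pq$, the proposal does not establish the theorem. A smaller wrinkle: you cannot ``match the profile of a genuine element of $G$ of order $pq$'' since you have assumed none exists; the goal should be stated simply as deriving an inconsistency, which you do eventually say, but the phrasing suggests some confusion about what the endgame looks like.
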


Below is a summary of known facts about torsion units of almost simple groups with socle $\PSL(2,p^f)$ which will be useful to us.

\begin{proposition}\label{prop:PSL2KnwonStuff}
Let $G$ be an almost simple group with socle $\PSL(2,p^f)$.
\begin{enumerate}
\item If $f \leq 2$, then the Prime Graph Question has a positive answer for $G$ \cite[Proposition 3.4 and Theorem 3.5]{BachleMargolis4primaryI}.
\item If $G = \PSL(2,p^f)$ or $G = \PGL(2,p^f)$ and $u \in V(\mathbb{Z}G)$ is a torsion unit of order coprime to $p$, then $u$ has the same order as an element of $G$ \cite[Proposition 6.7]{HertweckBrauer}, \cite[Proposition 3.4]{BachleMargolis4primaryI}.
\end{enumerate}
\end{proposition}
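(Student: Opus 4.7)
The plan is to prove the two parts separately, in both cases leveraging the HeLP method (Proposition~\ref{pr:luthar-passi-multiplicity-formula}) together with detailed knowledge of the character tables of $\PSL(2,q)$ and $\PGL(2,q)$.

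For part (2), let $u\in\V(\Z G)$ be a torsion unit of order $n$ coprime to $p$. By Lemma~\ref{lem:CohnLivingstone}, $n$ divides the exponent of $G$, whose $p'$-part is $\lcm((q-1)/d,(q+1)/d)$ with $d=\gcd(2,q-1)$ (and analogously for $\PGL$, with denominator $1$). The orders of $p$-regular elements in $\PSL(2,q)$ are exactly the divisors of $(q-1)/d$ or of $(q+1)/d$. If $n$ were not the order of any element, it would have to factor nontrivially as $n=rs$ with $r\mid q-1$, $s\mid q+1$, and $r,s>1$. I would then apply the Luthar--Passi formula to the principal series, discrete series, and Steinberg characters: the character values on semisimple classes are explicit sums of $(q\pm1)$-th roots of unity, so each multiplicity $\mu(\zeta,u,\chi)$ becomes a linear combination of partial augmentations with Galois-trace coefficients. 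Combined with the Berman--Higman vanishing $\varepsilon_1(u)=0$ (Theorem~\ref{th:pAsofTorsionUnits}(i)) and Lemma~\ref{lem:Congruence}, demanding non-negativity and integrality of every such multiplicity is known to force a contradiction, following the template of \cite{HertweckBrauer} for $\PSL(2,p)$ and of \cite{BachleMargolis4primaryI} for $\PGL(2,q)$.

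For part (1), the Prime Graph Question concerns only orders $pq$ with $p,q$ prime, so the task reduces to a finite list of candidate orders for each almost simple $G$ with socle $\PSL(2,q)$ and $q\in\{p,p^2\}$. I would combine three tools: (a) Theorem~\ref{th:CaicedoMargolis}, which disposes of any mixed order $pq$ whenever a Sylow $p$-subgroup of $G$ has order $p$; (b) Proposition~\ref{prop:KimmerleKonovalovQuotient}, which permits descent to the socle for pairs of primes not dividing the index; and (c) part (2) of the present proposition, which rules out all orders coprime to the defining characteristic that do not occur in $G$ itself. What remains after (a)--(c) are orders $pr$ in which $p$ is the defining characteristic and $p^2\mid|G|$, a short explicit list for $f\le 2$ which can be eliminated by direct HeLP computations on the character tables (often combined with the lattice method of \cite{BachleMargolisLattice} in its original modulo-$p$ form).

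The main obstacle is the mixed defining-characteristic case, i.e.\ units of order $pr$ with $p$ the defining characteristic and $p^2\mid|G|$: here the HeLP method alone is typically insufficient and this is precisely the difficulty that motivates the rest of the paper. For $f\le 2$ the Sylow $p$-structure is simple enough that the tools available prior to the present work cover all cases, which is why the statement of part (1) is restricted to this range; the harder cases $f\ge 3$, and in particular the obstruction posed by units of order $2p$, are the subject of Theorem~\ref{th:Order2p}.
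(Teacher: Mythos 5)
This proposition carries no proof in the paper: it is stated purely as a recollection of results from the literature, with the entire burden shifted to the citations \cite{HertweckBrauer} and \cite{BachleMargolis4primaryI}. There is therefore no ``paper's proof'' to compare against, and your task was effectively to reconstruct what those references do. Your sketch correctly identifies the relevant toolkit --- the Luthar--Passi multiplicity formula, the Berman--Higman constraint, the partial-augmentation congruences, the detailed character tables of $\PSL(2,q)$ and $\PGL(2,q)$, and (for part (1)) the Kimmerle--Konovalov descent and the Sylow-of-prime-order theorem --- and the high-level architecture you describe does reflect how the cited papers proceed, with the Hertweck argument handling $p$-regular torsion units via explicit HeLP computations on principal series, discrete series and Steinberg characters.

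That said, as a proof it is not self-contained. At the two critical points you defer to the literature rather than supply an argument: for part (2) you write that non-negativity and integrality of the Luthar--Passi multiplicities ``is known to force a contradiction,'' and for part (1) the residual defining-characteristic orders $pr$ with $p^2\mid|G|$ ``can be eliminated by direct HeLP computations.'' These are precisely the hard computations the cited papers perform, and leaving them as placeholders means the proposal is a roadmap, not a proof. A minor anachronism is also worth flagging: you invoke Theorem~\ref{th:CaicedoMargolis} as a primary tool for part (1), whereas the cited source \cite{BachleMargolis4primaryI} predates that theorem and argues case-by-case via HeLP and lattice methods; using a later general result is mathematically fine but does not faithfully reconstruct the referenced proof. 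Given that the paper itself treats this proposition as a citation, your level of detail is defensible as a recollection, but be aware that the core content is in the deferred steps.
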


\subsection{Structure of blocks with Klein four or dihedral defect groups}

Remark~\ref{rem:Height0AndSplit} below summarizes the essential facts on blocks whose defect groups are Klein four groups or dihedral groups of order eight. Most of this is due to Brauer. We then derive two results partially describing the structure of the blocks we will consider later. While we prove these results directly and elementarily using the methods of Plesken \cite{Plesken}, they can also be derived from existing results. The blocks considered in Proposition~\ref{prop structure v4 block} must be Morita equivalent to $RA_4$ by \cite[Theorem 1.1]{CravenEtAl} (using the classification of finite simple groups), which we can describe directly. The blocks considered in Proposition~\ref{prop structure block d8} were also described in full by the first author in \cite{EiseleDerEq}. As a general note we should point out that there are only a handful of possibilities for the decomposition matrices of blocks of defect $C_2\times C_2$ or $D_8$. There are only three possible decomposition matrices for blocks of Klein four defect, and for dihedral defect $D_8$ the decomposition matrices in Proposition~\ref{prop structure block d8} cover all possibilities for blocks with two isomorphism classes of simple modules. The blocks we consider cover most cases of interest for the Prime Graph Question.



\begin{remark}\label{rem:Height0AndSplit}
    \begin{enumerate}
        \item
        In a block with defect group $C_2 \times C_2$ all irreducible characters have height $0$ by a result of Brauer \cite[Corollary 12.1.5]{LinckelmannVol2}. Moreover, the values of the irreducible characters all lie in $\mathbb{Z}_2[\zeta]$ for $\zeta$ a $2'$-root of unity, as $\chi$ vanishes on elements with $2$-part of order bigger than $2$ by Green's Theorem on Zeros of Characters \cite[Theorem 19.27]{CurtisReinerI}.
        \item A block with defect group $D_8$ has exactly five ordinary irreducible characters, all of which are $2$-rational \cite[Theorem 3]{BrauerDihedralDefect}, meaning they take values in $\mathbb{Z}_2[\zeta]$ for $\zeta$ a $2'$-root of unity. Exactly four of these characters have height $0$, and one  has height $1$. We record for later use that a particular corollary of this is that if the decomposition matrix of the block is as in equation~\eqref{eqn decomp d8} below and the block is principal, then the character belonging to the third row has height $1$ and therefore the first column belongs to the trivial simple module.
        \item As a consequence of the preceding facts, given a block with defect group $C_2\times C_2$ or $D_8$, there exists an unramified extension $R$ of $\Z_2$ such that the block idempotent $b$ is defined over $R$ and both $KGb$ and $FGb$ are split. We can also assume that all Schur indices are $1$ by \cite[IV, Theorem 9.2]{Feit}.
    \end{enumerate}
\end{remark}    

\begin{remark}\label{remark symm form}
    Let $G$ be a finite group. We will repeatedly use the fact that $RG$ is a symmetric order with dualizing form  
    \begin{equation}
        T=\frac{1}{|G|}\chi_{\rm reg}=\frac{1}{|G|}\sum_{\chi\in\Irr(G)}\chi(1)\chi:\ KG \longrightarrow K,
    \end{equation}
    where $\Irr(G)$ denotes the irreducible characters of $G$ over an algebraic closure of $K$.
    Note that if we identify $KG$ with a direct sum of full matrix algebras, then $T$ is explicitly given as a weighted sum of traces (or reduced traces in the non-split case). We will repeatedly use the fact that for idempotents $e,f\in RG$ we have 
    \begin{equation}
        eRGf = \{ a \in eKGf \ | \ T(afRGe) \subseteq R  \},
    \end{equation}
    which means that $eRGf$ is fully determined by $fRGe$ (and vice versa). Even in the case $e=f$ this gives a strong restriction on the shape of $eRGe$.
\end{remark}

\begin{proposition}\label{prop structure v4 block}
    Let $G$ be a finite group and let $RGb$ be a block of defect $C_2 \times C_2$ with decomposition matrix 
    \begin{equation}
    \begin{pmatrix} 1 & 0 & 0 \\ 0 & 1 & 0 \\ 0 & 0 & 1 \\ 1 & 1 & 1 \\  \end{pmatrix}. 
    \end{equation} 
    Assume that $KGb$ is split (which holds for $R$ big enough by Remark~\ref{rem:Height0AndSplit}).
    If $e\in RGb$ is an idempotent annihilating a simple $RGb$-module, then 
    \begin{equation}\label{description order v4 block}
        eRGbe\cong
        \begin{tikzpicture}[baseline={(0,-0.13)}]
            \matrix (mat0) [matrix of math nodes, left delimiter=(, right delimiter=)] {
                R^{a\times a} \\
            };
            \matrix (mat1) [matrix of math nodes, left delimiter=(, right delimiter=)] at (2.5,0) {
                    R^{c\times c} \\
            };
            \matrix (mat2) [matrix of math nodes, left delimiter=(, right delimiter=)] at (5,0) {
                   R^{a\times a} & 4R^{a\times c}\\
                   R^{c\times a} & R^{c\times c}\\
            };
            \draw[thick] (mat0-1-1)  to [bend left]  node[left, above] {$\equiv_4$}  (mat2-1-1);
            \draw[thick] (mat1-1-1) to [bend right]  node[left, below] {$\equiv_4$} (mat2-2-2);
        \end{tikzpicture}
        \subseteq M_a(K)\oplus M_c(K) \oplus M_{a+c}(K)
    \end{equation}
    for certain $a,c\in \N_0$, where the arcs indicate entry-wise congruence modulo $4$.
\end{proposition}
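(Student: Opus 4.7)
My plan is to reduce $eRGbe$ to a matrix-style description whose entries are Hom-groups between the indecomposable projective summands of $eRGb$, and then to pin down each entry using the symmetric form of Remark~\ref{remark symm form} together with the height-$0$ assertion of Remark~\ref{rem:Height0AndSplit}(1). After relabelling the simples we may assume that $e$ annihilates $S_2$, and we fix a primitive decomposition $e = \sum_{k=1}^a f_1^{(k)} + \sum_{k=1}^c f_3^{(k)}$ with each $f_i^{(k)} RGb \cong P_i$ a projective cover of $S_i$. The decomposition matrix yields $eKGb \cong V_1^a \oplus V_3^c \oplus V_4^{a+c}$, so $eKGbe \cong M_a(K) \oplus M_c(K) \oplus M_{a+c}(K)$, matching the ambient algebra in~\eqref{description order v4 block}; moreover $eRGbe$ decomposes into a $2 \times 2$ block matrix whose diagonal blocks are $M_a(\End P_1)$ and $M_c(\End P_3)$ and whose off-diagonal blocks are matrices over the rank-$1$ $R$-lattices $\Hom(P_1, P_3)$ and $\Hom(P_3, P_1)$. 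The problem therefore reduces to determining these four Hom-modules.

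For the diagonal piece $\End(P_1) \subseteq \End_{KGb}(V_1 \oplus V_4) = K \oplus K$ (using Remark~\ref{rem:Height0AndSplit}(3) to ensure all Schur indices equal~$1$), I would observe that $\End(P_1)$ is a local $R$-order of rank $2$ containing the identity $(1,1)$, hence takes the form $\{(x, y) \in R \oplus R : x \equiv y \pmod{2^k}\}$ for some $k \geq 1$. To force $k = 2$ I would combine two inputs. First, the self-duality of $\End(P_1)$ with respect to the restriction of the dualizing form $T$ of Remark~\ref{remark symm form}, which explicitly takes the weighted shape $T|_{f_1 KGb f_1}(x, y) = \tfrac{\chi_1(1)}{|G|} x + \tfrac{\chi_4(1)}{|G|} y$; by the height-$0$ condition in Remark~\ref{rem:Height0AndSplit}(1), both coefficients have $2$-adic valuation exactly $-2$. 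Second, I would construct an explicit non-unit element of $\End(P_1)$ whose two eigenvalues differ by a generator of $4R$, which can be achieved (following Plesken~\cite{Plesken}) by exploiting the defect group sum $\hat D = \sum_{d \in D} d$ or a suitable central element of the block, combined with the rational decomposition of $P_1 \otimes K$. The argument for $\End(P_3)$ is identical.

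For the off-diagonal Homs, both of which live in $\Hom_{KGb}(V_4, V_4) = K$, the symmetric form pairs $\Hom(P_1, P_3)$ and $\Hom(P_3, P_1)$ perfectly (Remark~\ref{remark symm form}), while their composition $\Hom(P_3, P_1) \cdot \Hom(P_1, P_3) \subseteq \rad \End(P_1)$ is forced into the ``$(0, 4R)$-part'' of $\End(P_1)$ by the previous step. A suitable trivialization of $\Hom_{KGb}(V_4, V_4) \cong K$ then gives $\Hom(P_1, P_3) = R$ and $\Hom(P_3, P_1) = 4R$, producing the asymmetric off-diagonal blocks in~\eqref{description order v4 block}; reassembling the four pieces and reading off the diagonal congruences completes the proof. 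The main obstacle is the sharp determination $k = 2$ above: showing the precise congruence modulo $4$ (rather than merely modulo $2$) requires the height-$0$ condition to control $2$-adic valuations, together with an explicit non-invertible endomorphism of $P_1$ exhibiting that congruence, and it is here that Plesken's methods~\cite{Plesken} are brought to bear in place of the Morita equivalence with $RA_4$ that would follow from~\cite{CravenEtAl}.
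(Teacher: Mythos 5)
Your proposal follows the paper's argument closely: you reduce $eRGbe$ to a block matrix of Hom- and End-lattices between the projectives $P_1$ and $P_3$, and pin each entry down by combining the dualizing form of Remark~\ref{remark symm form} with the height-$0$ valuation information of Remark~\ref{rem:Height0AndSplit}(1). The one step you flag as the ``main obstacle'' is, however, not an obstacle at all, and it is the only place where you propose to do more work than is necessary. Once you know that $\End(P_1)$ is an order of the form $\Gamma_k=\{(x,y)\in R\oplus R : x\equiv y\ (\textrm{mod } 2^k)\}$, self-duality with respect to the restricted form $T(x,y)=\frac{\chi_1(1)}{|G|}x+\frac{\chi_4(1)}{|G|}y$ already determines $k=2$ exactly: the Gram matrix of $\Gamma_k$ in the basis $(1,1),(0,2^k)$ has determinant $2^{2k}\cdot\frac{\chi_1(1)\chi_4(1)}{|G|^2}$, which must be a unit for $\Gamma_k=\Gamma_k^{\vee}$, and since both coefficients have $2$-adic valuation $-2$ this forces $k=2$ with no slack in either direction. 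So there is no need to construct an explicit non-invertible endomorphism of $P_1$ exhibiting the modulo-$4$ congruence, and the paper makes no such construction — the Plesken-style argument you anticipate as the crux is a red herring, and you should simply trust the self-duality step you have already set up. Your treatment of the off-diagonal Hom-lattices (product lands in $0\oplus 4R$, the form pairs them perfectly, trivialize to get $R$ and $4R$) and the reassembly into block-matrix form are correct and match the paper's passage from the basic corner order to $\End_{e'RGbe'}(P_1^a\oplus P_3^c)$.
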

\begin{proof}
    Note that the assertion where $e$ annihilates more than one simple module follows from the assertion where $e$ annihilates exactly one simple module. So we only consider the latter case, and assume without loss of generality that the unique (up to isomorphism) simple module annihilated by $e$ corresponds to the first column of the decomposition matrix.
    
    We will first assume that $eRGbe$ is basic (which corresponds to the case $a=c=1$).  The decomposition matrix then shows that $eKGbe\cong K\oplus K \oplus M_2(K)$. We can fix an isomorphism and regard $eRGbe$ as an order in $K\oplus K \oplus M_2(K)$. As $eRGbe$ has two simple modules which correspond to primitive idempotents $f_1$ and $f_2$ such that $f_1 + f_2 =1$, after conjugation we can assume that $eRGbe$ contains the idempotents $f_1=(1,0,e_{11})$ and $f_2=(0,1,e_{22})$. Here $e_{ij}$ denotes the $(i,j)$ matrix unit in $M_2(K)$. This again follows from the shape of the decomposition matrix. 

    Label the irreducible characters of $G$ belonging to rows of the decomposition matrix above $\chi_1,\chi_2,\chi_3$ and $\chi_4$, in that order. The restriction of the symmetrizing form $T$ from Remark~\ref{remark symm form} to $eKGbe$ maps 
    $(x,y,z)\in K \oplus K \oplus M_2(K)$ to $\frac{\chi_2(1)}{|G|}x+\frac{\chi_3(1)}{|G|}y+\frac{\chi_4(1)}{|G|}{\rm tr}(z)$. Now $f_1eRGbef_1$ has an $R$-basis consisting of $f_1$ and $(0,0,2^te_{11})$ for some $t\in \N$, since any order in $f_1eRGbef_1\cong K\oplus K$ has a basis like this. We know that $f_1eRGbef_1=\{ v\in f_1eKGbef_1 \ | \ T(vf_1eRGbef_1)\subseteq R \}$, which implies that $2^t$ needs to have the same $2$-valuation as $\frac{|G|}{\chi_4(1)}$, that is, $t=2$ (note here that all characters have height zero by Remark~\ref{rem:Height0AndSplit}). Analogously we conclude that  $f_2eRGbef_2$ has an $R$-basis consisting of $f_2$ and $(0,0,4e_{22})$. 

    Note that $f_2eRGbf_1$ has a basis formed by some element $(0,0,2^t e_{21})$ for some $t \in \mathbb{N}_0$ and after conjugation we can ensure this element is $(0,0,e_{21})$. Then $f_1eRGbef_2=\{ v\in f_1eKGbef_2 \ | \ T(vf_2eRGbef_1)\subseteq R \}$, which shows that $f_1eRGbef_2$ has an $R$-basis consisting of $(0,0,4e_{12})$, where ``$4$''  arises as the $2$-part of $\frac{|G|}{\chi_4(1)}$. This completes the calculation of a basis of $eRGbe$, showing that it is of the claimed form.

    If $eRGbe$ is not basic, then we can find an idempotent $e'$ in $eRGbe$ such that $e'RGbe'$ is basic, and therefore of the claimed form with $a=c=1$. But then $eRGbe$ is Morita equivalent to $e'RGbe'$, and the orders described in~\eqref{description order v4 block} as we vary $a$ and $c$ range, up to isomorphism, over a whole Morita equivalence class. To be precise, if $P_1$ and $P_2$ denote the two projective indecomposables over $e'RGbe'$, then the order in \eqref{description order v4 block} is isomorphic to $\End_{e'RGbe'}(P_1^a\oplus P_2^c)$. In particular $eRGbe$ is of this form for some choice of $a$ and $c$. 
\end{proof}

\begin{proposition}\label{prop structure block d8}
    Let $G$ be a finite group and let $RGb$ be a block of defect $D_8$ with decomposition matrix 
    \begin{equation}
    \begin{pmatrix} \label{eqn decomp d8}
        1 & 0 \\ 1 & 0 \\ d & 1 \\ 1 & 1 \\ 1 & 1  \end{pmatrix} \quad \textrm{ for $d\in\{0,2\}$}. 
    \end{equation}
    Assume $KGb$ is split (which holds for $R$ big enough by Remark~\ref{rem:Height0AndSplit}). Let $e\in RGb$ be a non-zero idempotent annihilating the simple module belonging to the first column. Then $eRGbe \cong M_a(\Lambda)\subseteq M_a(K)\oplus M_a(K)\oplus M_a(K)$ for some $a\in \N$, where
    \begin{equation}
         \Lambda = \langle (1,1,1),\ (2,4,0),\ (4,0,0) \rangle_R \subseteq K\oplus K \oplus K.
    \end{equation}
\end{proposition}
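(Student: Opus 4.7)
My plan is to follow the blueprint of Proposition~\ref{prop structure v4 block} and then carry out a Plesken-style analysis for the local basic algebra. The first step is to reduce to the basic case $a=1$: if $e$ is not primitive, then by standard idempotent-lifting it decomposes as a sum of primitive idempotents, each of which must correspond to $S_2$ (as $e$ annihilates $S_1$). The second column of the decomposition matrix gives the character $\chi_3+\chi_4+\chi_5$ of the associated projective, so all such primitive projectives $RGbe_i$ are isomorphic to a fixed $P_2$. Hence $RGbe\cong P_2^{a}$ for some $a\geq 1$ and $eRGbe\cong \End_{RGb}(P_2^a)^{\opp}\cong M_a(e'RGbe')$, where $e'\leq e$ is primitive. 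It therefore suffices to prove the statement for primitive $e=e'$, in which case $e'KGbe'\cong K\oplus K\oplus K$ with $e'$ identified with $(1,1,1)$.

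Next I would determine the heights of $\chi_3,\chi_4,\chi_5$. Remark~\ref{rem:Height0AndSplit}(2) forces exactly one of the five characters in the block to have height one. Since the identity coefficient of $e'$ equals $T(e')=(n_3+n_4+n_5)/|G|\in R$, a $2$-adic valuation argument (three odd summands sum to an odd number) rules out all of $\chi_3,\chi_4,\chi_5$ having height zero. So exactly one of them has height one; after permuting the three $K$-summands I may assume it is $\chi_3$, whence $|G|/\chi_i(1)$ has $2$-part $4,8,8$ for $i=3,4,5$ respectively.

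Third, I would compute a Plesken-style skeleton for $e'RGbe'$. For each $i\in\{3,4,5\}$ the element $e'\bigl(\sum_{g\in G}\chi_i(g^{-1})g\bigr)e'$ lies in $e'RGbe'$ (using $2$-rationality from Remark~\ref{rem:Height0AndSplit}(2)) and corresponds to $(|G|/\chi_i(1))$ times the $i$-th standard idempotent of $K^3$; after scaling by a unit of $R$ this yields $(4,0,0),(0,8,0),(0,0,8)\in e'RGbe'$. Conversely, if $r e_i'\in e'RGbe'$, then $T(r e_i')=r\alpha_i\in R$ forces $v_2(r)\geq v_2(|G|/\chi_i(1))$, so these are the smallest axis-aligned elements. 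Set $\Lambda'=\langle(1,1,1),(4,0,0),(0,8,0),(0,0,8)\rangle_R\subseteq e'RGbe'$. Since $RGb$ is symmetric, so is $e'RGbe'$; hence $e'RGbe'$ is a $T$-self-dual lattice squeezed between $\Lambda'$ and $(\Lambda')^\vee$, and a length count shows that it is a length-one extension of $\Lambda'$ inside $(\Lambda')^\vee$.

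The main obstacle is identifying which length-one extension this is. The $F$-vector space $(\Lambda')^\vee/\Lambda'$ has exactly three one-dimensional subspaces, corresponding to adjoining $(0,4,0)$, $(2,0,0)$ or $(2,4,0)$ modulo $\Lambda'$. For the first two candidates one sees directly that $T$-duality forces $y\in 2R$ or $x\in 2R$, respectively, in the candidate's dual, so $(1,1,1)=e'$ fails to lie in that dual; these extensions therefore cannot be self-dual. The remaining extension produces the lattice $\Lambda$, and a direct check verifies $\Lambda=\Lambda^\vee$; the required compatibility is the congruence $2u_3+u_4+u_5\equiv 0\ (\textrm{mod}\ 8)$ on the units $u_i$ encoding $\alpha_i=n_i/|G|$, which in turn follows from $T(e')\in R$. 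Hence $e'RGbe'=\Lambda$, completing the proof.
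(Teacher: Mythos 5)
Your overall strategy mirrors the paper's: reduce to the basic case $a=1$, compute $T(e')$ and use the self-duality of $eRGbe$ with respect to the symmetrizing form to pin down the lattice. Your argument for why exactly one of $\chi_3,\chi_4,\chi_5$ has height one (via $T(e')\in R$) is a valid alternative to the paper's appeal to Remark~\ref{rem:Height0AndSplit}; though note that once one of them is known to have height one it is necessarily $\chi_3$, since $\chi_4(1)=\chi_5(1)$ forces those two to have equal height, so no ``permuting of summands'' is needed or permitted. The construction of the skeleton $\Lambda'$ from the elements $e'\bigl(\sum_g\chi_i(g^{-1})g\bigr)e'$ is also sound and close in spirit to the Plesken method the paper invokes.

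The step that fails is your analysis of $(\Lambda')^\vee/\Lambda'$. With $T((x,y,z))=\alpha_3x+\alpha_4y+\alpha_5z$ and $v_2(\alpha_3)=-2$, $v_2(\alpha_4)=v_2(\alpha_5)=-3$, the condition coming from $(1,1,1)\in\Lambda'$ is that $2u_3x+u_4y+u_5z\equiv 0\pmod 8$. Neither $(0,4,0)$ nor $(2,0,0)$ satisfies this: $T((0,4,0))=4\alpha_4$ and $T((2,0,0))=2\alpha_3$ both have $2$-valuation $-1$, so these elements do not lie in $(\Lambda')^\vee$ and cannot generate ``candidate extensions''. Your list of three one-dimensional subspaces is therefore wrong, and moreover the count of three presupposes $F=\mathbb{F}_2$, which is not the generic situation since $R$ typically must be enlarged to split $KGb$ and $FGb$. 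Finally, even with the correct quotient $(\Lambda')^\vee/\Lambda'\cong F^2$ in hand, self-duality alone need not single out a unique intermediate lattice (the induced quadratic form $m\mapsto T(m^2)\bmod R$ is not additive in characteristic two), so one has to argue either via the explicit form of $T$ or, as the paper does, by parameterizing the possible local orders in $K^3$ directly as $\langle(1,1,1),(r2^h,2^l,0),(2^m,0,0)\rangle_R$ and then using $T$-duality together with the refined condition that $(2,4,0)$ must lie in the order to determine $h,l,m,r$. Your final identification $\Lambda'+R(2,4,0)=\Lambda$ and the congruence $2u_3+u_4+u_5\equiv 0\pmod 8$ are correct, but the chain of reasoning leading there needs repair.
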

\begin{proof}
    As in Proposition~\ref{prop structure v4 block} it suffices to consider the case where $eRGbe$ is basic. The shape of the decomposition matrix then implies that $eRGbe$ is isomorphic to an order in $K\oplus K \oplus K$, which means that it is equal to
    \begin{equation}
        \Lambda=\langle (1,1,1),\ (r2^h, 2^l, 0),\ (2^m, 0,0) \rangle_R
    \end{equation}
    for certain $h,l,m\in \N$ and $r\in R^\times$. The symmetrizing form from Remark~\ref{remark symm form}  maps 
    $(x,y,z)\in K \oplus K \oplus K$ to $\frac{\chi_1(1)}{|G|}x+\frac{\chi_2(1)}{|G|}y+\frac{\chi_3(1)}{|G|}z$, where $\chi_1,\chi_2$ and $\chi_3$ denote the characters belonging to the third, fourth and fifth row of the decomposition matrix. By Remark~\ref{rem:Height0AndSplit} the coefficient $\frac{\chi_1(1)}{|G|}$ has $2$-valuation $-2$, while both $\frac{\chi_2(1)}{|G|}$ and $\frac{\chi_3(1)}{|G|}$ have $2$-valuation $-3$. Considering $T((2^m,0,0)\Lambda)\subseteq R$ and $T((4,0,0)) \in R$ implies that the $m$ is equal to the $2$-valuation of $\frac{|G|}{\chi_1(1)}$, so $m=2$. Considering $T(r2^h, 2^l, 0)= \frac{2^hr\chi_1(1)+2^l\chi_2(1)}{|G|}\in R$ we deduce $h=l-1$ and $r\equiv -\frac{2\chi_2(1)}{\chi_1(1)}\equiv 1\ (\textrm{mod }2)$. It follows that $l\geq 2$, and if $l\geq 3$, then $(2,4,0)$ is not in $\Lambda$ despite satisfying $T((2,4,0)\Lambda)\subseteq R$, a contradiction. So $l=2$ and $h=1$. It follows that the value of $r$ only matters modulo $2$, so we can pick $r=1$ without loss of generality.
\end{proof}

\section{Lifting units modulo $4$}

Now we will use the partial descriptions of the blocks with defect groups $C_2\times C_2$ and $D_8$ given in the preceding section to derive restrictions on the eigenvalues of a unit $u$ on irreducible ordinary representations. The general idea of the lattice method is to restrict irreducible $RG$-lattices to $R[u]$-lattices and the simple $FG$-modules to $F[u]$-modules, and to then use the information we get from the decomposition matrix on filtrations of the former (reduced modulo the characteristic of $F$) by the latter to constrain the eigenvalues of $u$.
This does not work well in even characteristic, since the eigenvalues $1$ and $-1$ become equal in $F$ and therefore a lot of information is lost when reducing  $R[u]$-lattices to $F[u]$-modules. We remedy this by reducing to $\bar R[u]=R/4R[u]$ instead, but the price we pay is that we need to consider the actual arithmetic structure of the blocks rather than just the decomposition matrix.

\begin{definition}
    \begin{enumerate}
    \item Let $V_+$ denote the trivial $KC_2$-module, and $V_-$ the unique (up to isomorphism) non-trivial irreducible $KC_2$-module.
    \item Let $G_+$ and $G_-$ denote the (unique up to isomorphism) $RC_2$-lattices with $K$-span isomorphic to $V_+$ and $V_-$, respectively. Let $G_0$ denote the free $RC_2$-lattice of rank one.
    \item For an $RC_2$-lattice $X$ we denote by $\bar X$ the $\bar RC_2$-module obtained by reduction modulo~$4$. In particular, $\bar G_+$, $\bar G_-$ and $\bar G_0$ are the respective reductions of $G_+$, $G_-$ and $G_0$ modulo~$4$.
    \end{enumerate}
\end{definition}

It is well-known (e.g. by \cite[Corollary 1 after Theorem 2.2]{Gudivok}) that $G_+$, $G_-$ and $G_0$ are representatives for the isomorphism classes of indecomposable $RC_2$-lattices. While we do not have a classification of $\bar R C_2$-modules as such, we do know that an $\bar R C_2$-module that arises as the reduction modulo 4 of an $RC_2$-lattice is the sum of copies of $\bar G_+$, $\bar G_-$ and $\bar G_0$. This will be enough for our purposes. We will rely on the following, which is a special case of a theorem of Maranda \cite[Theorem (30.14)]{CurtisReinerI}]. Of course it also follows from the classification of $RC_2$-lattices. 

\begin{proposition}[Maranda]\label{prop maranda}
    Let $L$ and $L'$ be $RC_2$-lattices such that $L/4L\cong L'/4L'$. Then $L\cong L'$. \qed
\end{proposition}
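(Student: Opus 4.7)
The plan is to combine Gudivok's classification of indecomposable $RC_2$-lattices (recalled just before the statement) with a Krull--Schmidt argument over the finite ring $\bar RC_2$. By the classification I can write
\[
L \cong G_+^{a}\oplus G_-^{b}\oplus G_0^{c} \quad\text{and}\quad L' \cong G_+^{a'}\oplus G_-^{b'}\oplus G_0^{c'}
\]
for uniquely determined $a,b,c,a',b',c'\in\N_0$. Since reduction modulo $4$ commutes with direct sums, the hypothesis $\bar L\cong \bar L'$ becomes an isomorphism of $\bar RC_2$-modules
\[
\bar G_+^{a}\oplus \bar G_-^{b}\oplus \bar G_0^{c} \;\cong\; \bar G_+^{a'}\oplus \bar G_-^{b'}\oplus \bar G_0^{c'},
\]
and it is enough to conclude $(a,b,c)=(a',b',c')$.

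To do this I would verify three things about the $\bar RC_2$-modules $\bar G_+,\bar G_-,\bar G_0$. First, that they are pairwise non-isomorphic: $\bar G_0$ has $\bar R$-length $4$ whereas $\bar G_\pm$ have $\bar R$-length $2$, and $\bar G_+\not\cong \bar G_-$ because the non-trivial element $t$ of $C_2$ acts as $+1$ on the former and $-1$ on the latter, with $1\not\equiv -1\pmod 4$. Second, that each of them is indecomposable: for $\bar G_\pm$ this is immediate from cyclicity over the local ring $\bar R$, while for $\bar G_0\cong \bar RC_2$ it follows from the computation that $\bar R C_2$ admits no idempotents other than $0$ and $1$. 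Third, that each has a local endomorphism ring: $\End_{\bar RC_2}(\bar G_\pm)\cong \bar R$ is local, and $\End_{\bar RC_2}(\bar G_0)\cong \bar RC_2$ is local because the ideal $(2,\,t-1)$ is maximal with quotient $F$, so in the finite ring $\bar RC_2$ the non-units coincide with the Jacobson radical. Having established these three facts, the uniqueness part of the Krull--Schmidt theorem applied to $\bar L\cong \bar L'$ yields $(a,b,c)=(a',b',c')$, and hence $L\cong L'$.

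I expect the only mildly technical step to be the verification that $\bar RC_2$ is local, which reduces to a direct inspection of idempotents and units in a finite commutative ring of size $16^{[R:\Z_2]}$. An even shorter route is to invoke Maranda's theorem in the form of \cite[Theorem (30.14)]{CurtisReinerI} at the exponent $2^{1+v_2(|C_2|)}=4$, which produces the proposition at once and is precisely the citation the authors already anticipate. As a sanity check on the argument one can alternatively recover $(a,b,c)$ from $\bar L$ by the three explicit invariants $\length_F((1+t)\bar L)=a+2c$, $\length_F((1-t)\bar L)=b+2c$, and $\length_F(\bar L)=2a+2b+4c$, which form an invertible linear system in $a,b,c$.
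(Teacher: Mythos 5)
Your argument is correct. The paper gives no actual proof of this proposition (the \verb|\qed| follows the statement immediately); the surrounding text simply cites Maranda's theorem \cite[Theorem (30.14)]{CurtisReinerI} and remarks that the statement ``also follows from the classification of $RC_2$-lattices.'' You have carried out that second route in full: decomposing $L$ and $L'$ via Gudivok's classification, verifying that $\bar G_+$, $\bar G_-$, $\bar G_0$ are pairwise non-isomorphic indecomposable $\bar R C_2$-modules with local endomorphism rings, and then applying Krull--Schmidt for finite-length modules. All verifications are sound (the locality of $\bar R C_2$ also follows instantly from it being a quotient of the local ring $RC_2$), and your parenthetical that one could instead invoke Maranda directly at modulus $2^{1+v_2(|C_2|)}=4$ is precisely the paper's primary citation. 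The closing invariants $\length_F((1\pm t)\bar L)$ and $\length_F(\bar L)$ are a correct, if redundant, sanity check.
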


\begin{lemma}\label{lemma ses lifts}
    Let $X$, $Y$ and $Z$ be $RC_2$-lattices and let
    \begin{equation}\label{ses over bar r}
        0\longrightarrow \bar X \stackrel{\iota}{\longrightarrow} \bar Y \stackrel{\varphi}{\longrightarrow} \bar Z \longrightarrow 0
    \end{equation}
    be a short exact sequence of $\bar R C_2$-modules. Then this sequence lifts to a short exact sequence of $RC_2$-lattices
    \begin{equation}
        0\longrightarrow X \longrightarrow Y \longrightarrow Z \longrightarrow 0.
    \end{equation}
\end{lemma}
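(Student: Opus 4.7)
My approach is to construct the lifted short exact sequence directly by embedding $X$ into $Y$ as an $R$-saturated sub-lattice in the right position, then identifying the quotient $Y/X$ with $Z$ via Maranda's theorem (Proposition~\ref{prop maranda}).

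First, consider the sub-lattice $Y_1 := \pi^{-1}(\iota(\bar X)) \subseteq Y$, where $\pi \colon Y \to \bar Y$ denotes reduction modulo $4$. Then $Y_1$ is an $RC_2$-sublattice of $Y$ containing $4Y$, of the same $R$-rank as $Y$, with $Y_1/4Y \cong \bar X$. Using the classification of indecomposable $RC_2$-lattices (Gudivok), $Y_1$ decomposes as a direct sum of indecomposable summands of types $G_+$, $G_-$, and $G_0$. I claim that $X$ appears as a direct $RC_2$-summand of $Y_1$, with the inclusion $\tilde\iota \colon X \hookrightarrow Y_1 \hookrightarrow Y$ recovering $\iota$ after reduction mod $4$, and with $\tilde\iota(X)$ also $R$-saturated in $Y$.

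Granting the claim, the quotient $Y/\tilde\iota(X)$ is a torsion-free $R$-module and hence an $RC_2$-lattice. Its reduction mod $4$ equals $\bar Y/\iota(\bar X) \cong \bar Z$, so by Maranda's theorem we obtain $Y/\tilde\iota(X) \cong Z$. The resulting sequence $0 \to \tilde\iota(X) \to Y \to Y/\tilde\iota(X) \to 0$ is then the desired lift of the given SES.

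The main obstacle will be establishing the claim that $X$ occurs as a summand of $Y_1$ of the correct isomorphism type with the correct $K$-span. For instance, when $Y = G_0$ and $\bar X = \bar G_+$, one computes explicitly $Y_1 \cong G_+ \oplus G_-$, and the desired $X = G_+$ must be identified with the $G_+$-summand (the other choice would fail to be $R$-saturated in $Y$). Verifying this in generality reduces, via Krull--Schmidt and the classification, to a case analysis on the indecomposable types appearing in $X$, $Y$, and $Z$, using the constraints imposed by the exactness of $0 \to \bar X \to \bar Y \to \bar Z \to 0$ to pin down the decomposition of $Y_1$ compatibly with the embedding $X \hookrightarrow Y$.
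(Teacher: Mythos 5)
Your strategy is genuinely different from the paper's: the paper first strips off $\bar G_0$-summands from $\bar X$ and $\bar Z$ (using projectivity and $\bar R$-duality) and then iteratively splits off indecomposable summands of $\bar Y$ until $\bar Y$ is $\bar RC_2$-free, at which point the surjection $\varphi$ lifts to a surjection $Y\to Z$ by projectivity and Maranda identifies the kernel with $X$. You instead work ``from the left'', taking the preimage $Y_1=\pi^{-1}(\iota(\bar X))$ and trying to locate $X$ inside it. Your surrounding reasoning is sound: given that $\tilde\iota(X)\subseteq Y_1$ is $R$-saturated in $Y$ and has the same rank as $X$, one does indeed get $\tilde\iota(X)+4Y=Y_1$ by a length count, hence $(Y/\tilde\iota(X))/4(Y/\tilde\iota(X))\cong Y/Y_1\cong\bar Z$, and Maranda (Proposition~\ref{prop maranda}) then gives $Y/\tilde\iota(X)\cong Z$.

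The problem is that the central claim --- that $X$ occurs as a direct $RC_2$-summand of $Y_1$ which is moreover $R$-saturated inside $Y$ --- is exactly what carries the full weight of the lemma, and you have not proved it; you acknowledge as much (``the main obstacle'') and only gesture at a case analysis via Krull--Schmidt. Note that this claim is not a small verification: a short computation shows $4Y/4Y_1\cong Y/Y_1\cong\bar Z$, so $\bar Y_1$ sits in a short exact sequence $0\to\bar Z\to\bar Y_1\to\bar X\to 0$, and moreover $(Y_1)_1\cong 4Y\cong Y$. In other words, passing from $(Y,\iota)$ to $(Y_1,\ \text{its preimage construction})$ merely ``rotates'' the problem without making it smaller, so there is no obvious induction or reduction hiding here, and the claim appears to be roughly as hard as the lemma itself. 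You would need to supply the case analysis you allude to (or an argument replacing it) before this counts as a proof; as it stands there is a genuine gap. For comparison, the paper's proof avoids this difficulty entirely by reducing to the situation where $\bar Y$ is $\bar RC_2$-free, where the lift is immediate.
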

\begin{proof}
    First we will show that we can assume without loss of generality that neither $\bar X$ nor $\bar Z$ has any direct summands isomorphic to $\bar G_0$. If $\bar Z$ has a summand isomorphic to $\bar G_0$, we can use projectivity to split off a summand of the form $0\longrightarrow 0 \longrightarrow \bar G_0 \longrightarrow \bar G_0 \longrightarrow 0$ from the sequence. 
    Note that $\bar X$, $\bar Y$ and $\bar Z$ are $\bar R$-free by assumption, which shows that our sequence is split as a sequence of $\bar R$-modules and therefore the functor $\Hom_{\bar R}(-,\bar R)$ takes it to another short exact sequence. We can view $\Hom_{\bar R}(-,\bar R)$ as a functor from $RC_2$-modules to $RC_2$-modules taking a representation to its inverse transpose. In particular, $\Hom_{\bar R}(\bar G_0, \bar R)\cong \bar G_0$, and applying $\Hom_{\bar R}(-,\bar R)$ twice is the same as applying the identity functor.
    If $\bar X$ has a direct summand isomorphic to $\bar G_0$, we can dualize the short exact by applying $\Hom_{\bar R}(-,\bar R)$, then split off a summand as above, and then apply $\Hom_{\bar R}(-,\bar R)$ again to dualize back.

    Next assume that $\bar Z=\bar Z_0\oplus \bar Z_1$ where $\bar Z_1$ is indecomposable, which means that $\bar Z_1$ is isomorphic to either $\bar G_+$ or $\bar G_-$. So $\bar Z_1/\rad(\bar Z_1)$ is simple, which shows that any homomorphism into $\bar Z_1$ is either surjective or has image contained in the radical. Hence there is a decomposition $\bar Y=\bar Y_0\oplus \bar Y_1$ such that $\bar Y_1$ is indecomposable and $\pi_{\bar Z_1}(\varphi(\bar Y_1))=\bar Z_1$, where $\pi_{\bar Z_1}$ denotes the projection onto $\bar Z_1$. If $\pi_{\bar Z_1}\circ \varphi$ induces an isomorphism between $\bar Y_1$ and $\bar Z_1$ then our short exact sequence decomposes as a direct sum
    \begin{equation}
    \left (        0\longrightarrow \bar X \stackrel{\iota}{\longrightarrow} \varphi^{-1}(\bar Z_0) \stackrel{\varphi}{\longrightarrow} \bar Z_0 \longrightarrow 0 \right )\oplus \left(         0\longrightarrow 0  \longrightarrow \bar Y_1 \stackrel{\varphi}{\longrightarrow} \varphi(\bar Y_1) \longrightarrow 0 \right).
    \end{equation}
    In this case both summands lift by induction on the rank of the terms. If $\pi_{\bar Z_1}\circ \varphi$  does not induce an isomorphism between $\bar Y_1$ and $\bar Z_1$, then $\bar Y_1$ must have bigger rank than $\bar Z_1$, which is only possible if $\bar Y_1\cong \bar G_0$.  If we decompose $\bar Y=\bar Y_0\oplus \bar Y_{\pm}$ where $\bar Y_0$ is a direct sum of copies of $\bar G_0$ and $\bar Y_\pm$ is a direct sum of copies of $\bar{G}_+$ and $\bar{G}_-$, then we can assume $\varphi(\bar Y_{\pm})\subseteq \rad(\bar Z)$ since otherwise we could split off further direct summands using the above argument. It follows that $\varphi(\bar Y_0)=\bar Z$, and, as a consequence,  $\pi_{\bar Y_\pm}(\iota(\bar X))=\bar Y_{\pm}$. 
    
    If $\bar Y_{\pm} = \bar Y_{\pm,0}\oplus \bar Y_{\pm,1}$ with $\bar Y_{\pm,1}$ indecomposable, then there is a direct sum decomposition $\bar X = \bar X_0\oplus \bar X_1$ with $\bar X_1$ indecomposable such that $\pi_{\bar Y_{\pm,1}}(\iota(\bar X_1))=\bar Y_{\pm,1}$. Since we assumed that $\bar X$ dos not have any summands isomorphic to $\bar G_0$ the map $\pi_{\bar Y_{\pm,1}}\circ \iota$ necessarily induces an isomorphism between $\bar X_1$ and $\bar Y_{\pm,1}$. We can now split our short exact sequence as 
    \begin{equation}
        \left (        0\longrightarrow \bar X_1 \stackrel{\iota}{\longrightarrow} \iota(\bar X_1){\longrightarrow} 0 \longrightarrow 0 \right )\oplus \left(         0\longrightarrow \iota^{-1}(\bar Y_{\pm,0}\oplus \bar Y_0)\longrightarrow \bar Y_{\pm,0}\oplus \bar Y_0  \stackrel{\varphi}{\longrightarrow} \bar Z \longrightarrow 0 \right).
    \end{equation}
    By repeated application of this step we can assume $\bar Y_\pm=0$. That is, we are left with a short exact sequence as in \eqref{ses over bar r} where $\bar Y$ is a direct sum of copies of $\bar G_0$, meaning $\bar Y$ is a free $\bar RC_2$-module. Now we can clearly lift $\varphi$ to a homomorphism $\hat \varphi:\ Y \longrightarrow Z$. This gives us a diagram as follows
    \begin{equation}
        \xymatrix{
            0\ar[r]& \ker(\hat \varphi) \ar[r] \ar@{->}[d] & Y \ar[r]^{\hat \varphi} \ar@{->>}[d]& Z\ar[r] \ar@{->>}[d]&0\\
            0\ar[r]&\bar X \ar[r]^{\iota} &\bar Y \ar[r]^{\varphi}& \bar Z\ar[r]&0,
        }
    \end{equation}
    and since these are short exact sequences of free $R$ and $\bar R$-modules, it follows by considering ranks that the leftmost vertical arrow induces and isomorphism between $\ker(\hat \varphi)/4\ker(\hat \varphi)$ and $\bar X$. By Proposition~\ref{prop maranda} we conclude 
    $\ker(\hat \varphi)\cong X$, making the top short exact sequence in the above diagram our desired lift.
\end{proof}

\begin{proposition}\label{prop:KleinFourDefect}
Assume that there is $2$-block of $G$ which has a Klein four defect group and decomposition matrix of the shape
\[\begin{pmatrix} 1 & 0 & 0 \\ 0 & 1 & 0 \\ 0 & 0 & 1 \\ 1 & 1 & 1 \\  \end{pmatrix}. \]
Let $p$ be any odd number. Label the irreducible characters belonging to the rows of this decomposition matrix $\psi,\chi_1,\chi_2$ and $\chi_3$ in that order. Let $u \in RG$ be a unit of order $2p$ and let $\zeta$ be a $p$-th root of unity such that $\mu(\zeta, u, \psi) = \mu(-\zeta, u, \psi) = 0$.  Then $\mu(\zeta, u, \chi_1+\chi_2) = \mu(\zeta, u, \chi_3)$.
\end{proposition}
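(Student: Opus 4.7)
First I would rephrase the proposition in terms of $RC_2$-lattices. Extending $R$ by an unramified extension to contain a primitive $p$-th root of unity $\zeta$ (possible since $p$ is odd), set $t=u^2$ and $s=u^p$, and let $e_{\zeta^2}=\frac{1}{p}\sum_{k=0}^{p-1}\zeta^{-2k}t^k\in R[t]$ be the idempotent for the $\zeta^2$-eigenspace of $t$. For each character $\chi$ in the block set $W_\chi:=e_{\zeta^2}L_\chi$; this is an $R\langle s\rangle\cong RC_2$-sublattice of $L_\chi$, and if $W_\chi\cong G_+^{\alpha_\chi^+}\oplus G_-^{\alpha_\chi^-}\oplus G_0^{\alpha_\chi^0}$ then $\mu(\zeta,u,\chi)=\alpha_\chi^++\alpha_\chi^0$ and $\mu(-\zeta,u,\chi)=\alpha_\chi^-+\alpha_\chi^0$. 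The hypothesis reads $W_\psi=0$, and the Brauer-character identity $\chi_3=\psi+\chi_1+\chi_2$ on the $2$-regular element $t$ gives $\dim_K W_{\chi_3}=\dim_K W_{\chi_1}+\dim_K W_{\chi_2}$. The claim will therefore follow from exhibiting a short exact sequence of $RC_2$-lattices
\begin{equation}\label{plan:target}
    0\to W_{\chi_1}\to W_{\chi_3}\to W_{\chi_2}\to 0,
\end{equation}
because additivity of $s$-eigenvalue multiplicities across \eqref{plan:target} yields $\mu(\pm\zeta,u,\chi_3)=\mu(\pm\zeta,u,\chi_1)+\mu(\pm\zeta,u,\chi_2)$.

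Next I would extract a trace identity modulo $4$ from Proposition~\ref{prop structure v4 block}. Take $e\in RGb$ an idempotent annihilating the simple $S_1$ from the first column and, by Morita reduction, basic ($a=c=1$). Writing $X=(x_1,x_2,X_{\mathrm{mat}})\in eRGbe\subseteq K\oplus K\oplus M_2(K)$, one has $\chi_1(X)=x_1$, $\chi_2(X)=x_2$ and $\chi_3(X)=\Tr(X_{\mathrm{mat}})$; the congruences $x_i\equiv(X_{\mathrm{mat}})_{ii}\pmod 4$ of Proposition~\ref{prop structure v4 block} give
\begin{equation}
    \chi_1(X)+\chi_2(X)\equiv\chi_3(X)\pmod 4\qquad(X\in eRGbe).
\end{equation}
In particular, since $(X_{\mathrm{mat}})_{12}\in 4R$, the image of $eue$ in $\overline{eRGbe}$ acts on $\overline{eL_{\chi_3}}$ as a lower-triangular matrix with diagonal $(\bar u_1,\bar u_2)$, producing a short exact sequence $0\to\overline{eL_{\chi_1}}\to\overline{eL_{\chi_3}}\to\overline{eL_{\chi_2}}\to 0$ of $\overline{eRGbe}$-modules.

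The third step is to propagate this $e$-corner information to the full lattice level. Since $RGbe$ is projective as a right $eRGbe$-module (being a direct summand of $RGb$), tensoring with $RGbe$ from the left is exact; combined with the Morita equivalence of the block with $RA_4$ cited before Proposition~\ref{prop structure v4 block}, this allows one to recognize $RGbe\otimes_{eRGbe}eL_{\chi_i}$ as the appropriate integral lattice $L_{\chi_i}$ in the block, and hence to inflate the mod-$4$ $e$-corner sequence to a short exact sequence $0\to\bar L_{\chi_1}\to\bar L_{\chi_3}\to\bar L_{\chi_2}\to 0$ of $\bar RG$-modules. Applying the exact idempotent $e_{\zeta^2}$ on the left (which commutes with both the right $eRGbe$-action and with reduction modulo $4$) yields $0\to\bar W_{\chi_1}\to\bar W_{\chi_3}\to\bar W_{\chi_2}\to 0$ of $\bar RC_2$-modules; Lemma~\ref{lemma ses lifts} lifts this to a short exact sequence of $RC_2$-lattices, whose terms are recognized as the $W_{\chi_i}$ by Proposition~\ref{prop maranda}, establishing \eqref{plan:target}.

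The main obstacle is the propagation in the third paragraph. The mod-$4$ information given by Proposition~\ref{prop structure v4 block} lives on the small $e$-corner $\overline{eL_{\chi_3}}$ (of $\bar R$-rank two in the basic case); upgrading it to a filtration on the much larger $\bar L_{\chi_3}$ that is compatible with the $s$-action requires essentially inverting the restriction functor $e\cdot{-}$ modulo~$4$, which is obstructed by $e$ not being a Morita idempotent (it kills $S_1$). The projectivity of $RGbe$ over $eRGbe$ and the rigidity of integral lattices in a $V_4$-defect block (via the Morita equivalence with $RA_4$) are both essential; identifying the correct lifts of $\overline{eL_{\chi_i}}$ to genuine $RGb$-lattices and verifying that the inflated mod-$4$ sequence is actually exact is where most of the technical work will lie.
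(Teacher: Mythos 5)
Your first two steps are sound and match the paper's: you build the idempotent $e_{\zeta^2}$, observe that it annihilates the simple module from the first column, invoke Proposition~\ref{prop structure v4 block}, and extract from the entry-wise mod-$4$ congruences a short exact sequence of $\overline{eRGbe}$-modules of the form $0\to eL_{\chi_1}/4eL_{\chi_1}\to eL_{\chi_3}/4eL_{\chi_3}\to eL_{\chi_2}/4eL_{\chi_2}\to 0$. The gap is the third step. The inflation $RGbe\otimes_{eRGbe}-$ does \emph{not} recover the full lattices $L_{\chi_i}$: by design $e$ is not a full idempotent (it kills $S_1$), so the natural map $RGbe\otimes_{eRGbe}eM\to M$ is not an isomorphism, and the Morita equivalence of $RGb$ with $RA_4$ is irrelevant since it is an equivalence of the whole block, not of the $e$-corner with the block. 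In fact a rank count rules the inflated sequence out entirely: there is no short exact sequence $0\to \bar L_{\chi_1}\to \bar L_{\chi_3}\to \bar L_{\chi_2}\to 0$ of $\bar R G$-modules because $\bar L_{\chi_3}$ has $\bar R$-rank $\chi_3(1)=\psi(1)+\chi_1(1)+\chi_2(1)$, which is strictly larger than $\chi_1(1)+\chi_2(1)$. You flag this as ``the main obstacle,'' but it is not a technicality to be overcome along those lines; the propagation step cannot succeed. The subsequent application of $e_{\zeta^2}$ to come back down is then moot (and, if $e=e_{\zeta^2}b$, the whole third step would amount to going up and back down the same corner).

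The resolution is that no detour through the full block is needed. The idempotent $e=e_{\zeta^2}b$ is itself an idempotent of $RGb$ annihilating $S_1$ (since $eV_\psi=0$ and $V_\psi$ reduces to $S_1$), so Proposition~\ref{prop structure v4 block} applies to $eRGbe$ directly, and $u^p$ commutes with $e$. Therefore the mod-$4$ short exact sequence you already have in the $e$-corner restricts, as is, to a short exact sequence of $\bar R[u^p]\cong\bar RC_2$-modules with $\bar R$-free terms. Lemma~\ref{lemma ses lifts} lifts it to a short exact sequence of $R[u^p]$-lattices, which after tensoring with $K$ gives $eW_{\chi_3}\cong eW_{\chi_1}\oplus eW_{\chi_2}$ as $K[u^p]$-modules; reading off the multiplicity of the trivial $K[u^p]$-module on both sides yields the claim. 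This is exactly the route the paper takes.
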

\begin{proof}
    Assume without loss of generality that $R$ is big enough to define the corresponding block idempotent $b$ and that $RGb$ is split (see Remark~\ref{rem:Height0AndSplit} regarding the latter). 
    Define the idempotent 
    \begin{equation}
        e=\frac{1}{p}\sum_{i=0}^{p-1} u^{2i}\zeta^{-2i}
    \end{equation}
    so that for a $KG$-module $V$ the unit $u^2$ acts as $\zeta^2$ on $eV$, and therefore $u$ acts with eigenvalues $\pm\zeta$. In particular $e$ annihilates the irreducible $KG$-module corresponding to $\psi$, by the assumption $\mu(\zeta, u, \psi) = \mu(-\zeta, u, \psi) = 0$.
    
    So, $eRGbe$ is as described in Proposition~\ref{prop structure v4 block}. Let $W_i$ be a simple $KGb$-module corresponding to the character $\chi_i$ for every $1 \leq i \leq 3$. Then for each $i$ the $eKGbe$-module $V_i=eW_i$ is either simple or $0$ and $V_1$, $V_2$ and $V_3$ correspond to the matrix components in~\eqref{description order v4 block}. We can regard $L_1=R^a$, $L_2=R^b$ and $L_3=R^{a+b}$ as $eRGbe$-lattices within these modules by letting $eRGbe$ act by the respective matrix component of the order on the righthand side of the isomorphism in \eqref{description order v4 block}. If $\Delta_i$ for $i\in\{1,2,3\}$ denotes the representation of $eRGbe$ on $L_i$, then the description of $eRGbe$ in \eqref{description order v4 block} implies that
    \begin{equation}
        \Delta_3(a) \equiv \left( \begin{array}{cc} \Delta_1(a) & 0 \\ * & \Delta_2(a) \end{array} \right)\ (\textrm{mod }4) \quad \textrm{ for all $a\in eRGbe$.}
    \end{equation}
    That is, we get a short exact sequence of $eRGbe$-modules $0\longrightarrow L_2/4L_2\longrightarrow L_3/4L_3\longrightarrow L_1/4L_1\longrightarrow 0$. We can restrict this to a sequence of $R[u^p]\cong RC_2$-modules. Then Lemma~\ref{lemma ses lifts} implies that this lifts to a short exact sequence $0\longrightarrow L_2 \longrightarrow L_3\longrightarrow L_1\longrightarrow 0$ of $R[u^p]$-lattices (the maps in this sequence need no longer be $eRGbe$-homomorphisms). After tensoring with $K$ we get that $V_3\cong V_1\oplus V_2$ as $K[u^p]$-modules. The multiplicity of the trivial $K[u^p]$-module in $V_3$ is exactly $\mu(\zeta, u, \chi_4)$, and the multiplicity of the trivial module in $V_1 \oplus V_2$ is exactly $\mu(\zeta, u, \chi_2+\chi_3)$, so the claim follows.
\end{proof}

In the applications of this proposition given later in this article we will only need the following special form for principal blocks. However the general version above could also be useful in other situations, e.g. to study rational conjugation of torsion unit in $V(\mathbb{Z}G)$ where $G$ is a direct product of groups.

\begin{corollary}\label{cor:KleinFourDefect}
Assume that the Sylow $2$-subgroup of $G$ is a Klein four group and that the decomposition matrix of the principal $2$-block of $G$ is as in Proposition~\ref{prop:KleinFourDefect}.
Let $p$ be any odd number. Label the irreducible characters belonging to the rows of the decomposition matrix $\mathbf{1}, \chi_1,\chi_2$ and $\chi_3$ in that order, and assume that $\mathbf{1}$ is the trivial character. If $u \in V(\mathbb{Z}G)$ has order $2p$, then $\mu(\zeta_p, u, \chi_1+\chi_2) = \mu(\zeta_p, u, \chi_3)$.
\end{corollary}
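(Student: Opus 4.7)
The plan is to derive the corollary as a direct instance of Proposition~\ref{prop:KleinFourDefect} applied to the principal $2$-block of $G$, the key point being that the trivial character makes the vanishing hypothesis of the proposition automatic.

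First I would set up the block. Since the Sylow $2$-subgroup of $G$ is Klein four, the principal $2$-block $b$ of $RG$ has Klein four defect group. Its decomposition matrix has the shape displayed in Proposition~\ref{prop:KleinFourDefect}, with the trivial character $\mathbf{1}$ sitting in the first row (one of the rows with a single $1$) and the simple module corresponding to the first column being the trivial $FG$-module. After enlarging $R$ as permitted by Remark~\ref{rem:Height0AndSplit}, I may assume $KGb$ is split. The unit $u \in V(\mathbb{Z}G)$ lies in $RG$ via the natural inclusion $\mathbb{Z}G \subseteq RG$ and retains order $2p$, so every ingredient required by Proposition~\ref{prop:KleinFourDefect} is available except for the vanishing assumption on~$\psi$.

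The second step is to verify this vanishing condition with $\psi = \mathbf{1}$. Since $u$ has augmentation $1$, the one-dimensional trivial representation sends $u$ to the scalar $\mathbf{1}(u) = 1$, so its unique eigenvalue is $1$. For odd $p > 1$ one has $\zeta_p \neq 1$ directly, and $-\zeta_p \neq 1$ because $-1$ is not a $p$-th root of unity when $p$ is odd. Hence $\mu(\zeta_p, u, \mathbf{1}) = \mu(-\zeta_p, u, \mathbf{1}) = 0$. Proposition~\ref{prop:KleinFourDefect} applied with $\psi = \mathbf{1}$ and $\zeta = \zeta_p$ then yields exactly the equality $\mu(\zeta_p, u, \chi_1 + \chi_2) = \mu(\zeta_p, u, \chi_3)$ claimed by the corollary.

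There is essentially no obstacle here: the entire content of the proof is carried by Proposition~\ref{prop:KleinFourDefect}, and the only observation that needs to be made is that in the principal block the trivial character automatically satisfies the hypothesis of the proposition at any non-trivial root of unity. The corollary is really just a convenient repackaging of the proposition tailored to the principal-block situation in which it will be used in the sequel.
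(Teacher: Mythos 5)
Your proof is correct and matches the paper's intent: the paper states this corollary without proof, treating it as an immediate specialization of Proposition~\ref{prop:KleinFourDefect}, and you have supplied exactly the (easy) verification that is left implicit, namely that $\mathbf{1}(u)=1$ forces $\mu(\zeta_p,u,\mathbf{1})=\mu(-\zeta_p,u,\mathbf{1})=0$ for odd $p>1$.
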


\begin{proposition}\label{prop:D8Defect}
Assume that there is a $2$-block of $G$ with dihedral group of order $8$ and decomposition matrix of the shape
\[\begin{pmatrix} 1 & 0 \\ 1 & 0 \\ d & 1 \\ 1 & 1 \\ 1 & 1  \end{pmatrix}, \]
where $d\in \{0,2\}$. Let $p$ be any odd number and let $u\in RG$ be a unit of order $2p$. 

Let $\psi$ be the ordinary character corresponding to the first row of the decomposition matrix and denote by $\chi_1$ and $\chi_2$ the ordinary characters corresponding to the third and fourth row, respectively. If $\zeta$ is a $p$-th root of unity such that $\mu(\zeta, u, \psi) = \mu(-\zeta, u, \psi) = 0$, then $\mu(\zeta, u, \chi_1) = \mu(\zeta, u, \chi_2)$.
\end{proposition}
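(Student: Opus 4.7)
My plan is to follow the strategy of Proposition~\ref{prop:KleinFourDefect}, adapted to the more intricate arithmetic of Proposition~\ref{prop structure block d8}. First I would extend scalars so that $R$ contains $\zeta$ and $RGb$ is split, and form the idempotent $e=\frac{1}{p}\sum_{i=0}^{p-1}u^{2i}\zeta^{-2i}\in RG$, which projects onto the $\zeta^2$-eigenspace of $u^2$. Writing $u=u_2u_p$ with $u_2=u^p$ of order $2$ and $u_p=u^{p+1}$ of order $p$, the $2$-part $u_2$ acts on each $eV$ as an involution whose $\pm 1$-eigenvalue multiplicities coincide with the $\pm\zeta$-multiplicities of $u$. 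Since the Brauer character of the simple $FGb$-module for the first column equals $\psi$ on $2$-regular elements and $u^2$ is $2$-regular, the hypothesis gives $\mu(\zeta^2,u^2,\psi)=0$, so $\bar e$ annihilates this simple module. Proposition~\ref{prop structure block d8} therefore yields $eRGbe\cong M_a(\Lambda)$, with the three matrix components in $M_a(K)^{3}$ corresponding to $\chi_1$, $\chi_2$ and a third character $\chi_3$ (the character of row $5$, with the same Brauer reduction as $\chi_2$). Let $L_i\cong R^a$ be the lattice in $V_i=eW_{\chi_i}$ coming from the $i$-th component, and let $U_i\in M_a(R)$ be the involution by which $u_2$ acts on $L_i$.

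The second step applies Maranda. The explicit description of $\Lambda$ shows that $\pi_2\equiv\pi_3\pmod 4$ and therefore $U_2\equiv U_3\pmod 4$. Restricting $L_2$ and $L_3$ to $R[u_2]\cong RC_2$-lattices, Proposition~\ref{prop maranda} gives $L_2\cong L_3$, so $\mu(\zeta,u,\chi_2)=\mu(\zeta,u,\chi_3)$.

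The third step compares $\chi_1$ with $\chi_2$. A direct computation, using that $p$ is odd (so no $u^{p+2i}$ equals $1$), shows $T(u_2 e)=0$. Expanding the dualizing form via $T=\frac{1}{|G|}\chi_{\mathrm{reg}}$ and discarding characters outside the block as well as $\psi$ and $\psi_2$ (the latter since $\psi_2$ has the same Brauer reduction as $\psi$, so the argument of the first step applies to it as well), one obtains the linear relation
\begin{equation}
    \chi_1(1)T_1+2\chi_2(1)T_2 = 0,
\end{equation}
where $T_i={\rm tr}(u_2|_{eV_{\chi_i}})=2\mu(\zeta,u,\chi_i)-a$. Combined with the congruence $T_1\equiv T_2\pmod 4$ coming from the $\Lambda$-relation $2\pi_1\equiv\pi_2+\pi_3\pmod 8$, and with the positivity of the $\chi_i(1)$, this should pin down $T_1=T_2=0$ and hence $\mu(\zeta,u,\chi_1)=\mu(\zeta,u,\chi_2)$.

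The hard part is closing this last deduction: the linear relation together with the mod-$4$ congruence alone still leaves admissible non-zero integer solutions, so an extra input is needed. The natural extra input, parallel to the Klein four case, would be a lattice isomorphism $L_1^{\oplus 2}\cong L_2\oplus L_3$ of $RC_2$-lattices, which combined with $L_2\cong L_3$ and Krull--Schmidt for $KC_2$-modules yields $V_1\cong V_2$ as $KC_2$-modules and therefore the stated equality. To produce this isomorphism I would exploit the triangular quotient $\Lambda/R\cdot(4,0,0)$, which fits into a short exact sequence of $\Lambda$-modules $0\to L_3\to \Lambda/R\cdot(4,0,0)\to L_2\to 0$ and encodes a nontrivial extension tying all three components together. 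After the Morita passage to $M_a(\Lambda)$, this should yield a short exact sequence of $\bar RC_2$-modules linking $L_1^{\oplus 2}/4$ to $L_2\oplus L_3$, which lifts via Lemma~\ref{lemma ses lifts} to the required isomorphism of $RC_2$-lattices.
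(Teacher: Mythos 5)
Your steps 1 and 2 are correct and match the paper's set-up: passing to the idempotent $e$, invoking Proposition~\ref{prop structure block d8}, and observing that the basis of $\Lambda$ gives $\pi_2\equiv\pi_3\pmod 4$, whence $L_2\cong L_3$ by Maranda. That observation is a clean way to dispose of the comparison between rows $4$ and $5$. The genuine gap is exactly the one you flag yourself: the comparison of $L_1$ with $L_2$, which is the content of the proposition, is not established.

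Two concrete problems with your proposed repair. First, the auxiliary computation with the symmetrizing form is not well-founded as stated: $T(u_2e)=0$ is a Berman--Higman-type vanishing for a torsion unit of $RG$, and even granting it, $T(u_2e)$ is a sum over \emph{all} irreducible characters of $G$, not only those in the block $b$; nothing lets you discard the other blocks, since $e$ has no reason to annihilate all modules outside $b$. You would have to work with $T(u_2eb)$, and then the vanishing is no longer available. Second, and more seriously, the ``extra input'' $L_1^{\oplus 2}\cong L_2\oplus L_3$ of $RC_2$-lattices is not proved, and the route you sketch cannot produce it: the $\Lambda$-module $\Lambda/R\cdot(4,0,0)$ fits into a short exact sequence with outer terms $L_2$ and $L_3$ only (one checks that $R\cdot(4,0,0)\cong L_1$ as a $\Lambda$-submodule, so the quotient is an extension of $L_3$ by $L_2$); the component $L_1$ does not appear in it at all, so no sequence ``linking $L_1^{\oplus 2}/4$ to $L_2\oplus L_3$'' arises from it. Note also that $L_1^{\oplus 2}\cong L_2\oplus L_3$ together with $L_2\cong L_3$ would force $L_1\cong L_2$ by Krull--Schmidt, which is a genuinely stronger conclusion than what the paper actually proves: the paper establishes only that the $K$-spans of $L_1$, $L_2$ and $L_3$ are isomorphic as $K[u^p]$-modules, and reaching even this weaker statement requires the delicate normal-form argument with $\Delta_1(u^p)=A+2B+4C$, $\Delta_2(u^p)=A+4B$, $\Delta_3(u^p)=A$, a sequence of conjugations exploiting the projectivity of $G_0$-summands and the relations forced by $A^2=(A+4B)^2=I$, and an application of Lemma~\ref{lemma ses lifts}. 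That analysis is the heart of the proof and is not replaced by the trace-form relation plus the congruence $T_1\equiv T_2\pmod 4$, which as you observe still admit nonzero integer solutions.
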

\begin{proof}
    As in the proof of Proposition~\ref{prop:KleinFourDefect} 
    we assume without loss of generality that $R$ is big enough to define the corresponding block idempotent $b$ and that $RGb$ is split. We again set $e=\frac{1}{p}\sum_{i=0}^{p-1} u^{2i}\zeta_p^{-2i}$ and we consider $eRGbe$. By Proposition~\ref{prop structure block d8} this $R$-order is isomorphic to $M_a(\Lambda)\subseteq M_a(K)\oplus M_a(K)\oplus M_a(K)$ for some $a\in \N$, where $\Lambda\subseteq K\oplus K \oplus K$ has an $R$-basis consisting of $(1,1,1)$, $(2,4,0)$ and $(4,0,0)$.

    Now $eRGbe$ acts on $L_1=R^a$, $L_2=R^a$ and $L_3=R^a$ through the projection of $M_a(\Lambda)$ to the first, second and third matrix component, respectively. We will restrict and consider $L_1$, $L_2$ and $L_3$ as lattices over $R[u^p]\cong RC_2$ only. In particular, lattice homomorphisms below will be $R[u^p]$-homomorphisms and may fail to be $eRGbe$-homomorphisms.
    Let us write $\Delta_i:\ R[u^p]\longrightarrow M_a(R)$ for the corresponding representations, and write 
    \begin{equation}
        \Delta_1(u^p) = A + 2B + 4C,\ \Delta_2(u^p)= A+4B, \ \Delta_3(u^p)=A \textrm{ for certain $A, B, C\in M_a(R)$}.
    \end{equation}
    It follows directly from the basis of $\Lambda$ given above that the representations must be of this form. By conjugating $u^p$ by an element of $\GL_a(R)\subset \GL_a(\Lambda)=\mathcal U(eRGbe)$ we can assume that
    \begin{equation}
        A=\left(\begin{array}{cccc} 0&I_f \\ I_f&0\\&&I_h \\&&&-I_l \end{array}\right)
    \end{equation}
    for $f,h,l\in \N_0$ such that $L_3 \cong G_0^f\oplus G_+^h \oplus G_-^l$ as an $R[u^p]$-lattice. Let $P\leq L_3$ denote the $R$-span of the first $2f$ standard basis vectors of $L_3=R^a$. Then $P$ spans a projective direct summand of $L_3$ as a lattice over $R[u^p]$. Since $\Delta_2$ and $\Delta_3$ coincide modulo $4$, the identity map from $\bar R^a$ to $\bar R^a$ induces an isomorphism between $L_3/4L_3$ and $L_2/4L_2$. Consider the restriction $\varphi: \ P/4P \longrightarrow L_2/4L_2$ of this isomorphism. Since $P$ is projective there exists an $R[u^p]$-homomorphism $\hat\varphi:\ P \longrightarrow L_2$ which lifts $\varphi$.  
    
    Now let $M$ be the matrix corresponding to the linear map from $R^a$ to $R^a$ which coincides with $\hat \varphi$ on the first $2f$ standard basis vectors and with the identity map on the remaining standard basis vectors.  Given that $\varphi$ was the restriction of the identity map on $\bar R^a$, the map $\hat \varphi$ will also coincide with the restriction of the identity map on $R^a$ modulo $4$ and therefore $M$ is congruent to the identity matrix modulo $4$. Write $M=I_a+4M'$. Now since $M$ induces an $R[u^p]$-homomorphism on the first $2f$ standard basis vectors, we have $\Delta_2(u^p)Mv = M\Delta_3(u^p)v$, if $v$ is one of these basis vectors. I.e.,
    \begin{equation}
        M^{-1}\Delta_2(u^p)M= \left(\begin{array}{cccc} 0&I_f \\ I_f&0\\&&I_h \\&&&-I_l \end{array}\right) + 4\cdot \left(\begin{array}{cccc} 0&0&*&* \\ 0&0&*&*\\0&0&*&* \\0&0&*&* \end{array}\right) 
    \end{equation}
    Now $(I_a + 2M', M, I_a) = (I_a+2M', I_a+4M', I_a)$ lies in $M_a(\Lambda)$ by the basis of $\Lambda$ exhibited above and is even invertible, as $2M'$ is an element of the radical. So, if we conjugate $u^p$ by this element we can assume that the first $2f$ columns in the matrix $B$ are equal to zero. So write 
    \begin{equation}
    B=\left(\begin{array}{cccc} 0 & 0 & * & * \\ 0 & 0 & * & *\\  0 & 0 & B_{33} & * \\  0 & 0 & * & B_{44} \\ \end{array}\right)
    \end{equation}
    where $B_{33} \in M_h(R)$ and $B_{44}\in M_{l}(R)$.

    Since all $\Delta_i$ are representations we have $A^2=I_a$ and $(A+4B)^2=I_a$. So we have $(A+4B)^2=I_a+4(AB+BA)+16B^2=I_a$, or $AB+BA=-4B^2$. Given the form of $A$ we get 
    \begin{equation}
        AB+BA = \left(\begin{array}{cccc}  0&0&*&*\\ 0&0&*&* \\ 0&0&2B_{33} &0\\0&0&0&-2B_{44} \end{array}\right).
    \end{equation} 
    It follows that both $B_{33}$ and $B_{44}$ are congruent to $0$ modulo $2$. If we now conjugate each $\Delta_i(u^p)$ by 
    \begin{equation}
        M''= \left(\begin{array}{ccc} I_{2f} \\ & 2I_h \\ &&I_l\end{array}\right)
    \end{equation} 
    we get representations $\Delta'_i$ which still take values in $M_a(R)$ (even though the conjugate of $B$ may have entries in $\frac{1}{2}R$) such that 
    \begin{equation}
        \Delta'_i(u^p)\equiv\left(\begin{array}{cccc} 0&I_f&*&* \\ I_f&0&*&*\\0&0&I_h&* \\0&0&0&-I_l \end{array}\right) \ (\textrm{mod }4).
    \end{equation}
    Let $L_i'$ denote the corresponding $R[u^p]$-lattices. The $L_i'$ correspond to full sublattices of the $L_i$, meaning they may be non-isomorphic to the $L_i$ but they have isomorphic $K$-span. The shape of $\Delta_i'$ implies that we have short exact sequences
    \begin{equation}
        0\longrightarrow \bar G_0^f  \longrightarrow L_i'/4L_i' \longrightarrow X_i \longrightarrow 0
    \end{equation}
    and 
    \begin{equation}
        0 \longrightarrow \bar G_+^h \longrightarrow X_i \longrightarrow \bar G_-^l\longrightarrow 0
    \end{equation}
    for some $\bar R$-free $\bar R[u^p]$-modules $X_i$ (corresponding to the $\bar R$-span of the last $h+l$ standard basis vectors). The first of these sequences is split since $\bar G_0^f$ is projective, and all terms are $\bar R$-free, so we can split off $\bar G_0^f$ after dualizing using $\Hom_{\bar R}(-,\bar R)$ like in Lemma~\ref{lemma ses lifts}. It therefore follows that $X_i$ is a direct sum of copies of $\bar G_0$, $\bar G_+$ and $\bar G_-$, which shows that $X_i$ is the reduction modulo $4$ of an $R[u^p]$-lattice. We can now apply Lemma~\ref{lemma ses lifts} to infer that the $K$-span of each $L_i'$ is isomorphic to the $K$-span of $G_0^f\oplus G_+^h\oplus G_-^l$. Notice that $\mu(\zeta, u, \chi_1)$ and $\mu(\zeta, u, \chi_2)$ are exactly the multiplicities of the trivial $K[u^p]$-module in the $K$-span of $L_1'$ and $L_2'$, respectively. As we just saw both are equal to $h$, so the claim follows.
\end{proof}

As in the Klein four case we now derive the consequence for the principal block which will be used in our applications.

\begin{corollary}\label{cor:D8Defect}
Assume that the principal $2$-block of $G$ has a dihedral defect group of order $8$ and that its decomposition matrix is as in Proposition~\ref{prop:D8Defect}. Let $p$ be any odd number and $u \in V(\mathbb{Z}G)$ of order $2p$. 
Denote by $\chi_1$ and $\chi_2$ the ordinary characters corresponding to the third and fourth row of the decomposition matrix, respectively. Then $\mu(\zeta_p, u, \chi_1) = \mu(\zeta_p, u, \chi_2)$.
\end{corollary}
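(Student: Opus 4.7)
The plan is to deduce this corollary as an immediate specialization of Proposition~\ref{prop:D8Defect} to the principal $2$-block, taking $\psi$ to be the trivial character $\mathbf{1}$ of $G$ and $\zeta=\zeta_p$. The only non-routine point is to confirm that $\mathbf{1}$ can indeed be chosen as the character labelling the first row of the decomposition matrix, and that the multiplicity hypothesis of the proposition is then automatic.

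For the identification of $\psi$ I would invoke Remark~\ref{rem:Height0AndSplit}(2). Because the block in question is principal and has the shape prescribed in Proposition~\ref{prop:D8Defect}, the character labelling the third row has height $1$, and consequently the first column of the decomposition matrix belongs to the trivial simple $FG$-module. Since the $2$-modular reduction of $\mathbf{1}$ is precisely the trivial simple module, the decomposition vector of $\mathbf{1}$ equals $(1,0)$. Hence $\mathbf{1}$ labels either the first or the second row of the decomposition matrix, and since these two rows play completely symmetric roles in the statement of Proposition~\ref{prop:D8Defect}, I may assume without loss of generality that $\psi=\mathbf{1}$.

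With this choice, the multiplicity hypothesis of Proposition~\ref{prop:D8Defect} is immediate: the representation affording $\mathbf{1}$ sends the unit $u\in V(\mathbb{Z}G)\subseteq RG$ to $1$, so its only eigenvalue is $1$. Since $p$ is odd and at least $3$, neither $\zeta_p$ nor $-\zeta_p$ equals $1$, and therefore $\mu(\zeta_p,u,\mathbf{1})=\mu(-\zeta_p,u,\mathbf{1})=0$. Proposition~\ref{prop:D8Defect} applied with $\zeta=\zeta_p$ then yields $\mu(\zeta_p,u,\chi_1)=\mu(\zeta_p,u,\chi_2)$, which is precisely the assertion of the corollary. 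There is no real obstacle in this derivation; the only substantive input beyond Proposition~\ref{prop:D8Defect} itself is Remark~\ref{rem:Height0AndSplit}(2), which is exactly the principal-block fact that makes the choice $\psi=\mathbf{1}$ available.
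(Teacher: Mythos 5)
Your proof is correct and follows the same route as the paper's: identify the trivial character with the first row of the decomposition matrix via Remark~\ref{rem:Height0AndSplit}(2), then specialize Proposition~\ref{prop:D8Defect} with $\psi=\mathbf{1}$. You are slightly more explicit than the paper in verifying the eigenvalue hypothesis $\mu(\zeta_p,u,\mathbf 1)=\mu(-\zeta_p,u,\mathbf 1)=0$ and in spelling out why rows one and two may be interchanged without loss of generality, but the underlying argument is identical.
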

\begin{proof}
To apply Proposition \ref{prop:D8Defect} it only remains to show that the first row of the decomposition matrix corresponds to the trivial character. This is clear by Remark~\ref{rem:Height0AndSplit} as there is exactly one ordinary irreducible character of height $1$ and the ordinary characters corresponding to the first two rows of the decomposition matrix have the same height.
\end{proof}

\section{Applications}

\subsection{General results}
 We write $\mathcal{C}_n$ for a set of representatives of the conjugacy classes of $G$ consisting of elements of order $n$. We will frequently use the fact that if $u \in V(\mathbb{Z}G)$ has order $n > 1$ and $\varepsilon_g(u) \neq 0$, then $g\neq 1$ and the order of $g$ divides $n$ (cf. Theorem~\ref{th:pAsofTorsionUnits}). Recall that for a prime $p$ an element of $G$ is called \emph{$p$-singular} if its order is divisible by $p$, and \emph{$p$-regular} if it is not.

\begin{lemma}\label{lem:FirstLemmaApplications}
Let $p$ and $q$ be different primes such that $G$ contains elements of order $p$ and $q$ but not of order $pq$. Let $\chi$ and $\psi$ be ordinary characters of $G$ which take the same values on all $q$-regular elements of $G$ and let $\xi$ be some $pq$-th root of unity (not necessarily primitive). If $u \in V(\mathbb{Z}G)$ is an element of order $pq$ and $\mu(\xi, u, \chi) = \mu(\xi, u, \psi)$ holds, then
 \begin{equation}\label{eqn main luthar passi lemma}
    \begin{array}{cl}&\displaystyle
 \Tr_{\mathbb{Q}(\zeta_q)/\mathbb{Q}}(\chi(u^p)\xi^{-p}) + \sum_{g \in \mathcal{C}_q} \varepsilon_g(u)\Tr_{\mathbb{Q}(\zeta_{pq})/\mathbb{Q}}(\chi(g)\xi^{-1}) \\
 =& \displaystyle \Tr_{\mathbb{Q}(\zeta_q)/\mathbb{Q}}(\psi(u^p)\xi^{-p}) + \sum_{g \in \mathcal{C}_q} \varepsilon_g(u)\Tr_{\mathbb{Q}(\zeta_{pq})/\mathbb{Q}}(\psi(g)\xi^{-1}) . 
    \end{array}
 \end{equation}
\end{lemma}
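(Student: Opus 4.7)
The plan is to apply the Luthar--Passi multiplicity formula of Proposition~\ref{pr:luthar-passi-multiplicity-formula} to both sides of the hypothesis $\mu(\xi,u,\chi) = \mu(\xi,u,\psi)$ and then simplify the resulting identity by using that $\chi - \psi$ vanishes on all $q$-regular elements of $G$.

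Since $u$ has order $pq$, Luthar--Passi expresses each multiplicity as a sum of four terms indexed by $d \in \{1, p, q, pq\}$, so subtracting the two expressions gives a vanishing sum of four traces of the form $\Tr_{\mathbb{Q}(\zeta_{pq/d})/\mathbb{Q}}((\chi-\psi)(u^d)\xi^{-d})$. The first task is to kill the contributions from $d \in \{q, pq\}$. The $d = pq$ term involves $(\chi - \psi)(u^{pq}) = (\chi-\psi)(1)$, which vanishes since $1$ is $q$-regular. For $d = q$, note that $u^q$ is a torsion unit of order $p$; by Theorem~\ref{th:pAsofTorsionUnits}(ii) its partial augmentations are supported on conjugacy classes of elements whose order divides $p$, and those elements are all $q$-regular, so $(\chi - \psi)(u^q) = \sum_g \varepsilon_g(u^q)(\chi(g)-\psi(g)) = 0$ by the hypothesis on $\chi$ and $\psi$.

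It remains to analyse the $d = p$ and $d = 1$ terms. For $d = p$ the element $u^p$ has order $q$ and the relevant field is $\mathbb{Q}(\zeta_{pq/p}) = \mathbb{Q}(\zeta_q)$, giving exactly the leading trace on each side of \eqref{eqn main luthar passi lemma}. For $d = 1$ I expand $\chi(u) = \sum_g \varepsilon_g(u)\chi(g)$ over a set of conjugacy class representatives of $G$: by Berman--Higman (Theorem~\ref{th:pAsofTorsionUnits}(i)) the class of $1$ does not contribute, by hypothesis there are no elements of order $pq$ in $G$, and by the agreement of $\chi$ and $\psi$ on the $q$-regular classes $\mathcal{C}_p$, only the classes in $\mathcal{C}_q$ survive in the difference $\chi(u) - \psi(u)$. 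The integer partial augmentations can be pulled out of the trace (now over $\mathbb{Q}(\zeta_{pq/1}) = \mathbb{Q}(\zeta_{pq})$) by $\mathbb{Q}$-linearity. Rearranging the resulting identity so that $\chi$-terms stand on the left-hand side and $\psi$-terms on the right gives exactly \eqref{eqn main luthar passi lemma}.

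The argument is essentially mechanical bookkeeping once the Luthar--Passi formula is in place; I do not anticipate any serious obstacle. The only minor point of care is to interpret the cyclotomic field $\mathbb{Q}(\zeta_{pq/d})$ appearing in each summand of Luthar--Passi as the natural field in which $\chi(u^d)\xi^{-d}$ lives (rather than the possibly smaller $\mathbb{Q}(\xi^d)$), so that the traces in \eqref{eqn main luthar passi lemma} are well-defined even when $\xi$ fails to be a primitive $pq$-th root of unity.
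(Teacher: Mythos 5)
Your proof is correct and follows essentially the same route as the paper: apply Luthar--Passi to both sides of $\mu(\xi,u,\chi)=\mu(\xi,u,\psi)$, kill the $d\in\{q,pq\}$ contributions using that $\chi$ and $\psi$ agree on $q$-regular classes (in particular on $1$ and on the support of $\varepsilon_\cdot(u^q)$), expand the $d=1$ term via partial augmentations, and observe that only $\mathcal{C}_q$ survives in the difference. Your closing remark about reading the fields in Luthar--Passi as $\mathbb{Q}(\zeta_{pq/d})$ rather than $\mathbb{Q}(\xi^d)$ when $\xi$ is imprimitive is a reasonable point of care, and the paper implicitly adopts the same convention.
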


\begin{proof}
As $\chi$ and $\psi$ take the same values on all $q$-regular elements, we have in particular $\chi(1) = \psi(1)$. Moreover,
\[\chi(u^q) = \sum_{g \in \mathcal{C}_p} \varepsilon_g(u^q) \chi(g) =  \sum_{g \in \mathcal{C}_p} \varepsilon_g(u^q) \psi(g) = \psi(u^q). \]
By the Luthar-Passi formula from Proposition~\ref{pr:luthar-passi-multiplicity-formula} we obtain
\begin{equation}\label{eq:FirstLemmaApplication} 
\begin{array}{rcl}

\mu(\xi,u,\chi) &=& \frac{1}{pq}\left(\chi(1) + \Tr_{\mathbb{Q}(\zeta_p)/\mathbb{Q}}(\chi(u^q)\xi^{-q}) + \Tr_{\mathbb{Q}(\zeta_q)/\mathbb{Q}}(\chi(u^p)\xi^{-p}) + \Tr_{\mathbb{Q}(\zeta_{pq})/\mathbb{Q}}(\chi(u)\xi^{-1})\right) \vspace{3pt} \\
&=& \frac{1}{pq}\left(\chi(1) + \Tr_{\mathbb{Q}(\zeta_p)/\mathbb{Q}}(\chi(u^q)\xi^{-q}) + \Tr_{\mathbb{Q}(\zeta_q)/\mathbb{Q}}(\chi(u^p)\xi^{-p}) \right. \vspace{3pt}  \\&&\left.+ \Tr_{\mathbb{Q}(\zeta_{pq})/\mathbb{Q}}\left( \sum_{g \in \mathcal{C}_p}\varepsilon_g(u)\chi(g)\xi^{-1} \right) 
 + \Tr_{\mathbb{Q}(\zeta_{pq})/\mathbb{Q}}\left(\sum_{g \in \mathcal{C}_q}\varepsilon_g(u)\chi(g)\xi^{-1}\right)\right)
\end{array}
\end{equation}
and the analogous equation for $\mu(\xi, u, \psi)$. So the assumed equation $\mu(\xi, u,\chi) = \mu(\xi, u, \psi)$ simplifies to the claimed equation~\eqref{eqn main luthar passi lemma} after cancelling equal terms on both sides, using $\chi(u^q) = \psi(u^q)$ and the fact that $\chi(g) = \psi(g)$ for all $g \in \mathcal{C}_p$.	
\end{proof}

As an application of the lemma we obtain the following.

\begin{corollary}\label{cor:ChiYPsiY}
Assume the situation of Lemma~\ref{lem:FirstLemmaApplications} and, in addition, that $G$ contains exactly one conjugacy class of elements of order $q$. Let $y \in G$ be an element in this class. Then $\chi(y) = \psi(y)$.
\end{corollary}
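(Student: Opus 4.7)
The plan is to exploit the single-class hypothesis to collapse the sum in the equation of Lemma~\ref{lem:FirstLemmaApplications} to one term and to show that the relevant character values are rational, after which the identity reduces to the product of $\chi(y)-\psi(y)$ with an explicit integer.

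First I would analyse $u^p$: since $u$ has order $pq$, the element $u^p$ has order $q$, and by Theorem~\ref{th:pAsofTorsionUnits} every nonzero partial augmentation of $u^p$ must sit at a class of elements whose order divides $q$. The Berman--Higman part rules out the identity class, and the hypothesis $\mathcal{C}_q=\{y\}$ together with the augmentation sum then forces $\varepsilon_y(u^p)=1$, so that $\chi(u^p)=\chi(y)$ and $\psi(u^p)=\psi(y)$.

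Next I would observe that $\chi(y),\psi(y)\in\mathbb{Q}$: each power $y^k$ with $\gcd(k,q)=1$ has order $q$ and therefore lies in the class of $y$, giving $\chi(y^k)=\chi(y)$. Since $\Gal(\mathbb{Q}(\zeta_q)/\mathbb{Q})$ acts on the character values at $y$ by $\sigma_k(\chi(y))=\chi(y^k)$, the value $\chi(y)$ is Galois-fixed and hence rational (in fact integral, being an algebraic integer as well), and the same holds for $\psi(y)$.

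Plugging these pieces of data into the conclusion of Lemma~\ref{lem:FirstLemmaApplications}, where the sum over $\mathcal{C}_q$ reduces to its single term at $y$, and pulling the rational scalar $\chi(y)-\psi(y)$ out of both traces yields
\[
    \bigl(\chi(y)-\psi(y)\bigr)\Bigl[\Tr_{\mathbb{Q}(\zeta_q)/\mathbb{Q}}(\xi^{-p}) + \varepsilon_y(u)\,\Tr_{\mathbb{Q}(\zeta_{pq})/\mathbb{Q}}(\xi^{-1})\Bigr]=0.
\]
The bracket is a concrete integer depending only on the order of $\xi$ and on $\varepsilon_y(u)$, so $\chi(y)=\psi(y)$ follows as soon as this integer is nonzero. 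The main obstacle is precisely this non-vanishing: for naive choices of $\xi$ such as a primitive $pq$-th or $p$-th root of unity the bracket can vanish when $\varepsilon_y(u)=1$, whereas for $\xi$ a primitive $q$-th root of unity the bracket simplifies to $-1-(p-1)\varepsilon_y(u)$, which cannot vanish at any integer $\varepsilon_y(u)$ once $p$ is odd. In the intended applications one therefore either works with a $\xi$ for which the bracket is unconditionally nonzero, or excludes the problematic value of $\varepsilon_y(u)$ by combining the augmentation sum, the non-negativity of Luthar--Passi multiplicities, and Lemma~\ref{lem:Congruence}.
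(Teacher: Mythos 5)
Your setup follows the paper's proof exactly through the display
\[
\bigl(\chi(y)-\psi(y)\bigr)\Bigl[\Tr_{\mathbb{Q}(\zeta_q)/\mathbb{Q}}(\xi^{-p}) + \varepsilon_y(u)\,\Tr_{\mathbb{Q}(\zeta_{pq})/\mathbb{Q}}(\xi^{-1})\Bigr]=0,
\]
including the correct rationality argument for $\chi(y)$ and $\psi(y)$ and the observation that all nonzero partial augmentations of $u^p$ sit on $y^G$. The gap is the final step: you correctly notice that the bracket can vanish for certain integer values of $\varepsilon_y(u)$, but you then leave the argument dangling among three possible escape routes without executing any of them. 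In particular the route of choosing $\xi$ to be a primitive $q$-th root of unity is not available: $\xi$ is part of the hypothesis inherited from Lemma~\ref{lem:FirstLemmaApplications} (one must already know $\mu(\xi,u,\chi)=\mu(\xi,u,\psi)$ for the given $\xi$), and in the paper's applications one takes $\xi=\zeta_p$ with $q=2$, precisely one of the ``naive'' choices you flag as potentially problematic.

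The resolution is purely Lemma~\ref{lem:Congruence}: since $u$ has order $pq\neq q$ and $y$ represents the unique class of elements of order $q$, one has $\varepsilon_y(u)\equiv 0 \ (\textrm{mod }q)$. On the other hand $\xi^{-p}$ is always a $q$-th root of unity, so $\Tr_{\mathbb{Q}(\zeta_q)/\mathbb{Q}}(\xi^{-p})\in\{q-1,\,-1\}$, and in either case this is $\equiv -1 \ (\textrm{mod }q)$. Hence the bracket is $\equiv -1 \ (\textrm{mod }q)$ and in particular a nonzero integer, for every admissible $\xi$. Neither the augmentation sum nor the non-negativity of Luthar--Passi multiplicities is needed. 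You cite the right lemma but stop short of applying it; that one congruence is all that was missing.
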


\begin{proof}
The condition that there is only one conjugacy class of elements of order $q$ implies that if $\varepsilon_g(u^p) \neq 0$, then $g \in y^G$. In particular, $\chi(y) = \chi(u^p)$ and $\psi(u^p) = \psi(y)$. Moreover, we can set $\mathcal{C}_q = \{y \}$. Observe that since $y^G$ is a rational class by assumption the values of $\chi(y)$ and $\psi(y)$ are both rational, which implies $\Tr_{\mathbb{Q}(\zeta_{pq})/\mathbb{Q}}(\chi(y)\xi^{-1})  = \chi(y) \Tr_{\mathbb{Q}(\zeta_{pq})/\mathbb{Q}}(\xi^{-1})$.
So equation~\eqref{eqn main luthar passi lemma}
 from Lemma~\ref{lem:FirstLemmaApplications} becomes
 \begin{equation}
 \begin{array}{rl}
 & \chi(y)\Tr_{\mathbb{Q}(\zeta_q)/\mathbb{Q}}(\xi^{-p}) + \varepsilon_y(u)\chi(y)\Tr_{\mathbb{Q}(\zeta_{pq})/\mathbb{Q}}(\xi^{-1}) \vspace*{.1cm} \\ 
 =& \psi(y)\Tr_{\mathbb{Q}(\zeta_q)/\mathbb{Q}}(\xi^{-p}) + \varepsilon_y(u)\psi(y)\Tr_{\mathbb{Q}(\zeta_{pq})/\mathbb{Q}}(\xi^{-1}) .
 \end{array} 
 \end{equation}
This can be rearranged as
\begin{align}\label{eq:ChiYPsiY}
(\chi(y) - \psi(y))(1 + \varepsilon_y(u)r) = 0,\quad  \textrm{ where } r = \frac{\Tr_{\mathbb{Q}(\zeta_{pq})/\mathbb{Q}}(\xi^{-1})}{\Tr_{\mathbb{Q}(\zeta_q)/\mathbb{Q}}(\xi^{-p})}.
\end{align}
Now $\varepsilon_y(u) \equiv 0 \ (\textrm{mod }q)$ by Lemma~\ref{lem:Congruence}. Notice that, regardless of the choice of $\xi$, we have $\Tr_{\mathbb{Q}(\zeta_q)/\mathbb{Q}}(\xi^{-p}) \equiv -1\ (\textrm{mod }q)$, since the only possible values of the trace are $q-1$ and $-1$. So $\varepsilon_y(u)r \equiv 0 \ (\textrm{mod }q)$ and therefore $1+\varepsilon_y(u)r \neq 0$. So, $\chi(y) - \psi(y) = 0$ holds by \eqref{eq:ChiYPsiY}.
\end{proof}

We are now ready to apply our results on the units in $2$-blocks from the previous section.

\begin{proposition}
Assume that a Sylow $2$-subgroup of $G$ is a Klein four group and that the decomposition matrix of the principal $2$-block of $G$ has shape
\[\begin{pmatrix} 1 & 0 & 0 \\ 0 & 1 & 0 \\ 0 & 0 & 1 \\ 1 & 1 & 1 \\  \end{pmatrix}. \]
Let $p$ be any prime. If $V(\mathbb{Z}G)$ contains elements of order $2p$, then $G$ contains elements of order $2p$.
\end{proposition}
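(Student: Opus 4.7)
The plan is to dispatch the case $p=2$ as vacuous and, for $p$ odd, to combine Corollary~\ref{cor:KleinFourDefect} with Corollary~\ref{cor:ChiYPsiY} to obtain an identity of character values at an involution, and then to upgrade it via Brauer's second main theorem to a global identity that contradicts the irreducibility of $\chi_3$. If $p=2$, then the exponent of the Sylow $2$-subgroup $V_4$ is $2$, so by Lemma~\ref{lem:CohnLivingstone} no unit of order $4$ can live in $V(\mathbb{Z}G)$ and the statement is vacuously true. Assume henceforth that $p$ is odd and suppose, for contradiction, that $u \in V(\mathbb{Z}G)$ has order $2p$ while $G$ contains no element of order $2p$.

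I would first verify that $G$ has a unique conjugacy class of involutions. The decomposition matrix gives $l(B_0)=3$ and $k(B_0)=4$ for the principal $2$-block $B_0$, so Brauer's subsection formula $k(B_0)=\sum_y l(b_y)$, summed over $G$-conjugacy classes of $B_0$-subsections, leaves exactly one non-trivial subsection $(y,b_y)$, with $l(b_y)=1$. Since the Sylow $2$-subgroup is $V_4$, whose non-identity elements are all involutions, this translates into a single $G$-conjugacy class of involutions; let $y$ be a representative. Now Corollary~\ref{cor:KleinFourDefect} delivers $\mu(\zeta_p,u,\chi_1+\chi_2)=\mu(\zeta_p,u,\chi_3)$ for a primitive $p$-th root of unity $\zeta_p$. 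Since $u$ acts as the identity on the trivial module, $\mu(\zeta_p,u,\mathbf{1})=0$, hence $\mu(\zeta_p,u,\mathbf{1}+\chi_1+\chi_2)=\mu(\zeta_p,u,\chi_3)$. The characters $\chi_3$ and $\mathbf{1}+\chi_1+\chi_2$ share a Brauer character by the decomposition matrix, so they agree on all $2$-regular elements of $G$. Corollary~\ref{cor:ChiYPsiY} with $q=2$, $\xi=\zeta_p$ and the representative $y$ therefore applies and yields $\chi_3(y)=1+\chi_1(y)+\chi_2(y)$.

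To convert this single identity into a contradiction I would invoke Brauer's second main theorem. Since $l(b_y)=1$, denoting by $\phi_y$ the unique irreducible Brauer character of $b_y$ we have $\chi(yt)=d^y_\chi\,\phi_y(t)$ for every $\chi\in B_0$ and every $2$-regular $t\in C_G(y)$, with integer coefficients $d^y_\chi$. Evaluating the identity from the previous step at $t=1$ forces $d^y_{\chi_3}=d^y_{\mathbf{1}}+d^y_{\chi_1}+d^y_{\chi_2}$, which then propagates to $\chi_3(yt)=\mathbf{1}(yt)+\chi_1(yt)+\chi_2(yt)$ for every $2$-regular $t\in C_G(y)$. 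Combined with the agreement on $2$-regular classes and the fact that every element of $G$ has $2$-part of order at most $2$ (because the Sylow $2$-subgroup is $V_4$), this gives $\chi_3=\mathbf{1}+\chi_1+\chi_2$ as class functions on $G$, contradicting the irreducibility of $\chi_3$.

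The main obstacle, as I see it, is this last step: a single character-value identity at an involution does not a priori determine values on the whole $2$-singular part of $G$. What makes the argument close here is precisely that $l(b_y)=1$, the ``$A_4$-type'' feature of Klein four blocks with $l(B_0)=3$ already highlighted in the discussion around Proposition~\ref{prop structure v4 block}; for other decomposition matrices of Klein four defect blocks the same chain of reasoning would not close nearly so cleanly.
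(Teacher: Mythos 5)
Your argument is correct and follows the same overall strategy as the paper's: dispatch $p=2$ via Lemma~\ref{lem:CohnLivingstone}, establish uniqueness of the involution class, extract $\chi(y)=\psi(y)$ (where $\chi=\mathbf{1}+\chi_1+\chi_2$ and $\psi=\chi_3$) via Corollaries~\ref{cor:KleinFourDefect} and~\ref{cor:ChiYPsiY}, and upgrade this single equality to the global identity $\chi=\psi$, contradicting linear independence of irreducible characters. Where you differ from the paper is in the two auxiliary steps. For uniqueness of the involution class you count subsections, using $k(B_0)=\sum l(b)$ together with Brauer's third main theorem to see that each involution class must supply a non-trivial $B_0$-subsection; the paper instead argues group-theoretically, observing that if the non-identity elements of a Sylow $2$-subgroup were not all fused then $N_G(P)=C_G(P)$, whence Burnside's normal $p$-complement theorem would produce a non-trivial linear character trivial on $2$-regular elements and hence a repeated row in the decomposition matrix. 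For the upgrade from the single equality at $y$ to constancy on all $2$-singular classes you rederive the fact directly from Brauer's second main theorem together with $l(b_y)=1$, whereas the paper cites \cite[Theorem~7.7]{Navarro}, which packages exactly this consequence. Your version is more self-contained and makes explicit the role of $l(b_y)=1$ that the citation hides, at the cost of invoking heavier block-theoretic machinery (subsection counts, second and third main theorems) for facts the paper obtains by citation and an elementary fusion argument.
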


\begin{proof}
The case $p=2$ follows from Lemma~\ref{lem:CohnLivingstone}, so we assume $p$ odd from now on.
We first show that $G$ contains only one conjugacy class of involutions. Indeed, let $P$ be a Sylow $2$-subgroup of $G$ and assume that not all the non-trivial elements of $P$ are conjugate in $G$. Then any element of $G$ normalizing $P$ must centralize one of the non-trivial elements. But it  cannot interchange the other two non-trivial elements either, as there is no involution contained in $N_G(P)$ which is not in $P$ (since $P$ is a Sylow $2$-subgroup of $G$). So the centralizer of $P$ in $G$ equals the normalizer and by the $p$-complement theorem of Burnside \cite[IV, Hauptsatz 2.6]{HuppertI} the group $G$ contains a normal $2$-complement. But then $G$ has a non-trivial linear character whose restrictions to all $2$-regular elements is trivial, contradicting the assumed shape of the decomposition matrix (which has no repeated rows). Hence $G$ contains exactly one conjugacy class of involutions. We denote a fixed involution from this class by $y$.

Now define $\chi$ as the sum of the three irreducible ordinary characters of the principal block whose restriction to $2$-regular elements is an irreducible Brauer character (i.e. $\chi$ corresponds to the sum of the first three rows of the decomposition matrix) and let $\psi$ denote the fourth irreducible ordinary character in the block (corresponding to the last row of the decomposition matrix). Assume that a unit $u \in V(\mathbb{Z}G)$ of order $2p$ exists. We will show that this assumption implies $\chi = \psi$, contradicting the linear independence of ordinary irreducible characters.

First note that $\chi(g) = \psi(g)$ holds for any $2$-regular element $g \in G$ by the shape of the decomposition matrix. 
Moreover, $\mu(\zeta_p, u, \chi) = \mu(\zeta_p, u, \psi)$ holds by Corollary~\ref{cor:KleinFourDefect} and, as noted before, $G$ contains exactly one conjugacy class of involutions. Hence we can apply Corollary~\ref{cor:ChiYPsiY} to conclude $\chi(y) = \psi(y)$. By \cite[Theorem 7.7]{Navarro} the irreducible ordinary characters in a principal block of Klein four defect for a group with only one conjugacy class of involutions take the same value on all $2$-singular elements. So if $g \in G$ is any element of even order then 
$\chi(g) = \chi(y) = \psi(y) = \psi(g)$, yielding the desired equality $\chi = \psi$ giving the contradiction.
\end{proof}

\begin{proposition}\label{prop:ApplicationD8}
Assume that a Sylow $2$-subgroup of $G$ is a dihedral group of order $8$ and that the decomposition matrix of the principal $2$-block has shape
\[\begin{pmatrix} 1 & 0 \\ 1 & 0 \\ d & 1 \\ 1 & 1 \\ 1 & 1  \end{pmatrix}, \]
where $d \in \{0,2 \}$. 
Let $p$ be any prime. Then $V(\mathbb{Z}G)$ contains elements of order $2p$ if and only if $G$ contains elements of order $2p$.
\end{proposition}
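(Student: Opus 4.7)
The case $p=2$ is immediate since $D_8$ contains elements of order $4$, so $G$ does as well and the biconditional holds trivially. For the remainder, fix an odd prime $p$ and assume for contradiction that $u\in V(\mathbb{Z}G)$ has order $2p$ while $G$ contains no element of order $2p$. By Berman--Higman and Theorem~\ref{th:pAsofTorsionUnits}(ii), the partial augmentations of $u$ are supported on conjugacy classes of elements of order $2$ and of order $p$.

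Label the irreducible characters associated to the rows of the decomposition matrix as $\psi_1,\psi_2,\chi_1,\chi_2,\chi_3$ (in that order). By Remark~\ref{rem:Height0AndSplit}(2) the first column corresponds to the trivial simple $FG$-module, so $\psi_1=\mathbf{1}$ is the trivial character. Depending on $d$, I define
\[
(\chi,\psi)=\begin{cases}(\chi_1+\mathbf{1},\chi_2) & \text{if } d=0,\\ (\chi_1,\chi_2+\mathbf{1}) & \text{if } d=2,\end{cases}
\]
so that inspection of the decomposition matrix gives equality of the restrictions of $\chi$ and $\psi$ to the $2$-regular elements of $G$. Since $\mu(\zeta_p,u,\mathbf{1})=0$ (as $\zeta_p\ne 1$), Corollary~\ref{cor:D8Defect} yields $\mu(\zeta_p,u,\chi)=\mu(\zeta_p,u,\psi)$.

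Apply Lemma~\ref{lem:FirstLemmaApplications} with the roles of its $p,q$ played by our $p$ and $2$ respectively, and with $\xi=\zeta_p$. Using $\Tr_{\mathbb{Q}(\zeta_{2p})/\mathbb{Q}}(\zeta_p^{-1})=-1$ together with the fact that $\chi-\psi$ takes rational integer values on involutions (by $2$-rationality, Remark~\ref{rem:Height0AndSplit}), the lemma's equation reduces to
\[
\sum_{y\in\mathcal{C}_2}\bigl(\varepsilon_y(u^p)-\varepsilon_y(u)\bigr)(\chi-\psi)(y)=0.
\]
To finish, I would first show that $G$ has exactly one conjugacy class of involutions. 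Granting this, $\varepsilon_y(u^p)=1$ and $\varepsilon_y(u)\equiv 0\pmod 2$ by Lemma~\ref{lem:Congruence}, so $1-\varepsilon_y(u)$ is odd and hence $(\chi-\psi)(y)=0$; this matches the conclusion of Corollary~\ref{cor:ChiYPsiY} in this setting. Combined with the vanishing on $2$-regular elements, a $D_8$-analog of the Navarro-type rigidity used in the Klein four case would force $\chi$ and $\psi$ to agree on all of $G$, giving $\chi_2=\chi_1+\mathbf{1}$ or $\chi_1=\chi_2+\mathbf{1}$, which contradicts the linear independence of the irreducible characters $\mathbf{1},\chi_1,\chi_2$.

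The main obstacle is the structural step that $G$ has a single class of involutions. Unlike the Klein four case, the Burnside transfer argument does not apply directly because $P/Z(P)$ embeds non-trivially into $N_G(P)/C_G(P)$ when $P=D_8$. Instead, I would combine Erdmann's classification of tame blocks with the standing assumption of exactly two simple modules in the principal block, and exploit the linear character $\psi_2$---which furnishes an index-$2$ normal subgroup $N$ of $G$ containing all $2$-regular elements---to constrain the possible fusion of the three $P$-classes of involutions in $G$. If this argument still leaves two $G$-classes of involutions (the central and a single non-central one), the plan is to produce a second independent linear equation by replacing $\mathbf{1}$ with $\psi_2$ in the definition of $\chi$ and $\psi$, extracting the values of the $\chi_i$ on each involution class via the block-theoretic structure described in Proposition~\ref{prop structure block d8}, and then solving the resulting system to still force the desired equality $\chi=\psi$.
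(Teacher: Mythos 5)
Your opening is sound: the $p=2$ case, the identification of the first row with the trivial character via heights, and the use of $\mu(\zeta_p,u,\mathbf 1)=0$ to transfer $\mu(\zeta_p,u,\chi_1)=\mu(\zeta_p,u,\chi_2)$ to your padded pair $(\chi,\psi)$ are all correct; in fact pairing the row-$3$ and row-$4$ characters with a copy of $\mathbf 1$ so that Lemma~\ref{lem:FirstLemmaApplications} applies verbatim (same restriction to all $2$-regular elements) is a careful touch, and your reduction to $\sum_{y\in\mathcal C_2}(\varepsilon_y(u^p)-\varepsilon_y(u))(\chi-\psi)(y)=0$ is right. The proposal goes wrong at the structural step. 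There is no hope of proving that $G$ has a single class of involutions: with Sylow $2$-subgroup $D_8$ and exactly two irreducible Brauer characters in the principal block, $G$ has \emph{exactly two} classes of involutions, and the paper proves precisely this (three classes would give a normal $2$-complement by Burnside, hence too many linear characters trivial on $2$-regular elements; and the two linear characters give an index-$2$ normal subgroup $N$ so that some involutions of $P$ lie outside $N$). Trying to force one class and then falling back to ``a $D_8$-analog of Navarro-type rigidity'' or ``solving a resulting system via Erdmann's classification'' is not an argument; nothing concrete is produced.

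The move your proposal is missing is the one that makes the whole thing work: let $\eta$ be the non-trivial linear character of the block, so that $\eta(u^p)=\pm 1$ gives $\varepsilon_x(u^p)-\varepsilon_y(u^p)=\pm 1$, which together with $\varepsilon_x(u^p)+\varepsilon_y(u^p)=1$ pins down $\{\varepsilon_x(u^p),\varepsilon_y(u^p)\}=\{0,1\}$, say $\varepsilon_y(u^p)=1$. With that fixed, the paper then rearranges the equation from Lemma~\ref{lem:FirstLemmaApplications} into $(\chi(y)-\psi(y))(1-\varepsilon_y(u))-(\chi(x)-\psi(x))\varepsilon_x(u)=0$ and exploits the parity information $\chi(g)-\psi(g)\equiv\chi(1)-\psi(1)=d-1\equiv 1\pmod 2$ to contradict Lemma~\ref{lem:Congruence}. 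Note, however, that with your padded pair the values $(\chi-\psi)(g)$ on involutions become \emph{even} (they are $(\chi_1-\chi_2)(g)\pm 1\equiv (d-1)\pm 1\equiv 0\pmod 2$), so the single parity equation you derive is vacuous mod $2$; to make the argument close you must work with the paper's $\chi,\psi$ (rows $3$ and $4$ directly) where the coefficient is odd, and one should then check carefully what the correct form of the resulting identity is, since those two characters do not literally satisfy the ``equal on $2$-regular elements'' hypothesis of Lemma~\ref{lem:FirstLemmaApplications} --- they differ by the constant $d-1$ there, which contributes an extra summand $-(d-1)\sum_{g\in\mathcal C_p}\varepsilon_g(u)$ to the identity. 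You should either verify that this term is harmless or rework the reduction so that the parity contradiction still stands.
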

\begin{proof}
The case $p=2$ follows from Lemma~\ref{lem:CohnLivingstone}, so we assume $p$ odd from now on. Note that we can assume that the first row of the decomposition matrix corresponds to the trivial representation by Remark~\ref{rem:Height0AndSplit}.
We first observe that $G$ contains exactly two conjugacy classes of involutions. Indeed, three classes would imply the existence of a normal $2$-complement by Burnside's $p$-complement theorem \cite[IV, Hauptsatz 2.6]{HuppertI}, which in turn would mean that there are strictly more than two irreducible ordinary characters of $G$ whose restriction to the $2$-regular elements is trivial. Moreover, there are exactly two ordinary irreducible characters whose restriction to $2$-regular elements is trivial, so $G$ has a normal subgroup $N$ of index $2$. It follows that if $P$ is a Sylow $2$-subgroup of $G$, then $P/(P\cap N) \cong C_2$, meaning that $N$ cannot contain all five involutions of $P$. Let $x$ and $y$ denote representatives for the two classes of involutions, where $y$ does not lie in $N$.

Assume now that $u \in V(\mathbb{Z}G)$ of order $2p$ exists. We claim that $\varepsilon_x(u^p) = 1$ or $\varepsilon_y(u^p) = 1$. Indeed, $\varepsilon_x(u^p) + \varepsilon_y(u^p)=1$ follows by Theorem~\ref{th:pAsofTorsionUnits} and if $\eta$ is the non-trivial ordinary linear character  in the principal block then 
\[\pm 1 = \eta(u^p) = \varepsilon_x(u^p) - \varepsilon_y(u^p).\]
Together the two equations imply the claim. We assume that $\varepsilon_y(u^p) = 1$, swapping $x$ and $y$ if necessary, as we will not need the assumption $y \notin N$ any more.

Next denote by $\chi$ the irreducible character corresponding to the third row of the decomposition matrix and by $\psi$ the character corresponding to the fourth row. Then Corollary~\ref{cor:D8Defect} implies $\mu(\zeta_p, u, \chi) = \mu(\zeta_p, u, \psi)$. Hence by Lemma~\ref{lem:FirstLemmaApplications} we obtain, with $\xi = \zeta_p$ and $q=2$, that
\[ \chi(y) - (\varepsilon_y(u)\chi(y) + \varepsilon_x(u)\chi(x)) = \psi(y) - (\varepsilon_y(u)\psi(y) + \varepsilon_x(u)\psi(x)) \]
and thus
\begin{align}\label{eq:ApplicationLemmaForD8}
0 = (\chi(y)-\psi(y))(1-\varepsilon_y(u)) - (\chi(x)-\psi(x))\varepsilon_x(u).
\end{align}

Denote by $\varphi$ the non-trivial irreducible $2$-Brauer character in the block, which corresponds to the second column of the decomposition matrix. Then from the shape of the decomposition matrix we get $\chi(1) = \varphi(1) + d$ and $\psi(1) = \varphi(1) + 1$. Hence, $\chi(1) - \psi(1) = d - 1$.
Note that if $g \in G$ is an involution and $\alpha$ any ordinary character of $G$, then $\alpha(1) \equiv \alpha(g)\ (\textrm{mod } 2)$, just because a matrix representing $g$ has eigenvalues $1$ or $-1$. So, by the assumption that $d\in\{0,2\}$, we obtain
\[\chi(y) -\psi(y) \equiv \chi(x) - \psi(x) \equiv d - 1 \equiv 1 \ (\textrm{mod } 2). \]
Substituting these congruences in \eqref{eq:ApplicationLemmaForD8} we thus get 
\[0 \equiv 1 - \varepsilon_y(u) - \varepsilon_x(u)  \ (\textrm{mod } 2),\]
contradicting the fact that $\varepsilon_y(u) + \varepsilon_x(u)$ is even by Lemma~\ref{lem:Congruence}.
\end{proof}

\subsection{Applications for $\PSL(2,p^f)$}

We first summarize some well know facts about almost simple groups with socle $\PSL(2,p^f)$.

\begin{proposition}\label{prop:OutPSL2}\cite[II, Hauptsatz 8.27]{HuppertI}, \cite[Section 3.3.4]{Wilson}
Let $G = \PSL(2,p^f)$ where $f$ is odd and let $d = \gcd(2,p-1)$. The order of $G$ is $\frac{(p^f-1)p^f(p^f+1)}{d}$ and the Sylow $2$-subgroups of $G$ are dihedral. The Sylow $2$-subgroups of $\PGL(2,p^f)$ are also dihedral. The outer automorphism group of $G$ is isomorphic to $C_d \times C_f$. Here the direct factor decomposition can be a realized such that the cyclic group of order $f$ is generated by the automorphism induced by the entry-wise application of the Frobenius automorphism of $\mathbb{F}_{p^f}$ to the entries of matrices in $\SL(2,p^f)$ and such that moreover, if $\alpha$ is a generator of the cyclic factor $C_d$ in the outer automorphism group, then $\langle G, \alpha \rangle \cong \PGL(2,p^f)$.
\end{proposition}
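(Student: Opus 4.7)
The proposition collects three classical facts about $G = \PSL(2,p^f)$: the order, the dihedral structure of the Sylow $2$-subgroups in $G$ and in $\PGL(2,p^f)$, and the structure of $\Out(G)$. I would establish them in this order.

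The order formula is a direct computation. From $|\GL(2,p^f)| = (p^{2f}-1)(p^{2f}-p^f)$ one reads off $|\SL(2,p^f)| = p^f(p^f-1)(p^f+1)$, and dividing by $|Z(\SL(2,p^f))| = |\{\pm I\}| = d$ yields the stated formula for $|G|$.

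For the Sylow $2$-structure (which is nontrivial only for $p$ odd, since $d=1$ makes the rest of the proposition vacuous when $p=2$), I would work with the two conjugacy classes of maximal tori in $\SL(2,p^f)$: the split torus $T_+$ of order $p^f-1$ and the non-split torus $T_-$ of order $p^f+1$. Each normalizer is generated by its torus and a Weyl-type involution inverting it, and exactly one of $p^f \pm 1$ is divisible by $4$. Taking a Sylow $2$-subgroup of the normalizer of the torus whose order has the larger $2$-part gives a dihedral $2$-group whose image in $\PSL(2,p^f)$ is dihedral of order equal to the $2$-part of $|\PSL(2,p^f)|$. The same argument inside $\GL$, quotienting out $Z(\GL)$, produces dihedral Sylow $2$-subgroups of $\PGL(2,p^f)$.

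For $\Out(G)$ I would invoke Steinberg's classification of automorphisms of simple groups of Lie type: every automorphism of $\PSL(2,p^f)$ is an inner-diagonal-field composite, and in type $A_1$ there is no graph automorphism since the Dynkin diagram has no nontrivial symmetry. The diagonal automorphisms modulo inner yield $\PGL(2,p^f)/\PSL(2,p^f) \cong C_d$, while the Frobenius $\phi : x\mapsto x^p$ acting entrywise on matrix entries generates a $C_f$ modulo inner-diagonal automorphisms. To upgrade this from a semidirect to a direct product I would verify that $\phi$ commutes modulo inner with a representative of the diagonal generator: taking conjugation by $\operatorname{diag}(\alpha,1)$ for a non-square $\alpha \in \F_{p^f}^\times$ as the diagonal representative, the commutator is conjugation by $\operatorname{diag}(\alpha^{p-1},1)$, and $\alpha^{p-1}$ is a square in $\F_{p^f}^\times$ since $p-1$ is even. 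Choosing $\alpha$ in this way and letting $\alpha$ also denote the induced outer automorphism of $G$, one immediately has $\langle G,\alpha\rangle \cong \PGL(2,p^f)$, and the Frobenius gives the commuting $C_f$ factor.

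The main subtlety is the splitting of $\Out(G)$ as a direct product rather than a semidirect product; everything else is a routine unpacking of standard facts about linear groups, which is why in practice one just cites the references given.
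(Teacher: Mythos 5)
The paper states this proposition without proof, simply citing Huppert and Wilson, so there is no internal argument to compare against; what follows assesses your sketch on its own terms.

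The order computation and the treatment of $\Out(G)$ are correct and well chosen. In particular, the commutator calculation $[\phi, c_\alpha] = c_{\alpha^{p-1}}$ together with the observation that $\alpha^{p-1}$ is a square for $p$ odd is exactly the right way to see that the diagonal and field automorphisms commute modulo inner; combined with $\gcd(d,f)=1$ (which holds since $f$ is odd) this does give the direct product decomposition, and the choice of $\alpha = c_\alpha$ as a representative makes $\langle G,\alpha\rangle = \PGL(2,p^f)$ manifest.

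The Sylow $2$ part contains a real imprecision. In $\SL(2,q)$ the element normalizing and inverting the split torus is $\left(\begin{smallmatrix}0&1\\-1&0\end{smallmatrix}\right)$, which has order $4$, not $2$; there is no ``Weyl-type involution'' at the level of $\SL$. Consequently the Sylow $2$-subgroup of the normalizer of a maximal torus in $\SL(2,q)$ is a generalized quaternion group, not a dihedral group, and the sentence ``gives a dihedral $2$-group whose image in $\PSL(2,p^f)$ is dihedral'' is self-contradictory: the group you produce inside $\SL$ is quaternion, and only its image modulo the center $\{\pm I\}$ is dihedral. The cleanest fix is to carry out the torus-normalizer argument directly in $\PSL(2,q)$ (where the normalizers of the two classes of maximal tori are dihedral of orders $q-1$ and $q+1$) and in $\PGL(2,q)$ (where they are dihedral of orders $2(q-1)$ and $2(q+1)$); in each case the one with the larger $2$-part yields a dihedral Sylow $2$-subgroup of the right order. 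With this rephrasing the sketch is complete.
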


Now we are ready to apply our previous results.

\begin{theorem}\label{th:PSLOrder2p}
Let $q=p^f$ be a prime power such that $q \equiv \pm 3\ (\textrm{mod } 8)$. If $G$ is an almost simple group with socle $\PSL(2,q)$, then $V(\mathbb{Z}G)$ contains no elements of order $2p$.
\end{theorem}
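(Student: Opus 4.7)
Let $S = \PSL(2,q)$. Since $q = p^f$ is odd and $q \equiv \pm 3 \pmod 8$, the prime $p$ is odd and $f$ must be odd as well (otherwise $p^f \equiv 1 \pmod 8$). By Proposition~\ref{prop:OutPSL2}, we then have $\Out(S) \cong C_2 \times C_f$ with $f$ odd, and the unique involution in $\Out(S)$ is the diagonal outer automorphism coming from $\PGL(2,q)/\PSL(2,q)$. Consequently a Sylow $2$-subgroup of $S$ has order $4$ and is Klein four, while a Sylow $2$-subgroup of $\PGL(2,q)$ has order $8$ and is dihedral. The plan is to split into two cases according to the Sylow $2$-subgroup of $G$: either $[G:S]$ is odd and a Sylow $2$ of $G$ is $V_4$, or $G \supseteq \PGL(2,q)$ and a Sylow $2$ of $G$ is $D_8$.

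In the Klein four case, the principal $2$-block of $S$ is Morita equivalent to $RA_4$: the normaliser $N_S(V_4) \cong A_4$ fuses all three involutions of $V_4$, and this fusion uniquely determines the block up to Morita equivalence (cf.\ the remarks preceding Proposition~\ref{prop structure v4 block}), forcing its decomposition matrix to have the shape required by Proposition~\ref{prop:KleinFourDefect}. Since $[G:S]$ is odd, Clifford-theoretic arguments (simple modules and ordinary characters extend up the tower of coprime index) ensure the principal $2$-block of $G$ has the same decomposition matrix, so Corollary~\ref{cor:KleinFourDefect} applies. Combined with the Klein four application proposition from the previous subsection, this yields that a unit of order $2p$ in $V(\Z G)$ would force the presence of an element of order $2p$ in $G$.

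The dihedral case proceeds analogously with $\PGL(2,q)$ in place of $S$: the principal $2$-block of $\PGL(2,q)$ has two irreducible Brauer characters and a decomposition matrix of the shape required in Proposition~\ref{prop:D8Defect} (with $d \in \{0,2\}$, determined from the known character table of $\PGL(2,q)$). Since $[G:\PGL(2,q)]$ is odd, the shape is preserved for $G$, and Corollary~\ref{cor:D8Defect} together with Proposition~\ref{prop:ApplicationD8} gives the ``if and only if'' between units of order $2p$ in $V(\Z G)$ and elements of order $2p$ in $G$.

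It then remains to rule out elements of order $2p$ in $G$. Any non-trivial $p$-element $u$ of $S$ is unipotent, and a direct matrix calculation gives $C_{\PGL(2,q)}(u) = U$, the Sylow $p$-subgroup containing $u$. Extending to $\Aut(S) = \PGL(2,q) \rtimes \Gal(\F_q/\F_p)$ one computes $|C_{\Aut(S)}(u)| = f \cdot p^f$, which is odd because $f$ is odd. Hence no involution of $\Aut(S)$ commutes with any non-trivial $p$-element of $S$, so $G$ has no element of order $2p$. The main technical obstacle in executing this plan is the precise determination of the decomposition matrix of the principal $2$-block of $G$ in each case: the classification results behind Propositions~\ref{prop structure v4 block} and~\ref{prop structure block d8} fix the shape only up to Morita equivalence, and one must match the rows and columns carefully (in particular, identify the row belonging to the trivial character) in order to invoke Corollaries~\ref{cor:KleinFourDefect} and~\ref{cor:D8Defect}.
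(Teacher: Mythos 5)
There is a genuine gap in the Klein four case. You assert that because $N_S(V_4)\cong A_4$ fuses all three involutions, the principal $2$-block of $S=\PSL(2,q)$ is forced to be Morita equivalent to $RA_4$, and hence has decomposition matrix of the shape required by Corollary~\ref{cor:KleinFourDefect}. This is false for $q\equiv -3 \ (\textrm{mod }8)$: the fusion system alone does not distinguish the two non-nilpotent Morita classes of Klein four blocks. By Burkhardt's computation (\cite[VIII]{Burkhardt}, cited in the paper), when $q\equiv -3\ (\textrm{mod }8)$ the decomposition matrix of the principal $2$-block of $S$ is
\begin{equation}
\begin{pmatrix} 1 & 0 & 0 \\ 1 & 1 & 0 \\ 1 & 0 & 1 \\ 1 & 1 & 1 \end{pmatrix},
\end{equation}
corresponding to a block Morita equivalent to the principal block of $A_5$, not of $A_4$. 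This does not have the shape demanded by Proposition~\ref{prop:KleinFourDefect} or Corollary~\ref{cor:KleinFourDefect}, and no odd-index overgroup can repair this (the odd part of the outer automorphism group fixes all three of the first three characters, so the two $A_5$-type rows persist). Your Klein four branch therefore breaks down for half of the allowed values of $q$.

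The paper avoids this problem entirely by a simple but crucial reduction: since $\Z G\subseteq \Z\Aut(S)$ and hence $V(\Z G)\leq V(\Z\Aut(S))$ for every almost simple $G$ with socle $S$, one may assume $G=\Aut(\PSL(2,q))$, which always has a dihedral Sylow $2$-subgroup of order $8$. Passing to the $D_8$ block collapses the two outer-automorphism-conjugate characters of degree $\frac{q+\delta}{2}$ onto a single Brauer character and produces, for both residues $q\equiv\pm 3\ (\textrm{mod }8)$, a decomposition matrix of the required shape with $d\in\{0,2\}$; then Proposition~\ref{prop:ApplicationD8} finishes. You should adopt that reduction rather than the case split on $[G:S]$. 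On the positive side, your final paragraph verifying directly that $C_{\Aut(S)}(u)$ has odd order for a non-trivial $p$-element $u$, so that $G$ contains no element of order $2p$, is a useful step that the paper leaves implicit when invoking the ``if and only if'' in Proposition~\ref{prop:ApplicationD8}.
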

\begin{proof}
It suffices to show the claim for the maximal group among the almost simple groups with socle $\PSL(2,q)$, that is, for $\Aut(\PSL(2,q))$. So set $G = \Aut(\PSL(2,q))$ and recall that by Proposition~\ref{prop:OutPSL2} we have $\Out(\PSL(2,p^f)) \cong C_2 \times C_f$. Moreover, let $\delta \in \{ \pm 1\}$ so that $\delta \equiv q\ (\textrm{mod } 4)$. Note that by the assumption on $q=p^f$ the exponent $f$ is odd.

By the assumption on $q=p^f$ a Sylow $2$-subgroup of $\PSL(2,q)$ has order $4$ and hence is a Klein four group by Proposition~\ref{prop:OutPSL2}.
The decomposition matrix of the principal $2$-block of $\PSL(2,q)$ is either  
\begin{center}
$\begin{pmatrix} 1 & 0 & 0 \\ 0 & 1 & 0 \\ 0 & 0 & 1 \\ 1 & 1 & 1  \end{pmatrix}$ \ if  $q \equiv 3\ (\textrm{mod } 8)$, \ or $\begin{pmatrix} 1 & 0 & 0 \\ 1 & 1 & 0 \\ 1 & 0 & 1 \\ 1 & 1 & 1 \end{pmatrix}$ \ if $q \equiv -3\ (\textrm{mod } 8)$, by \cite[VIII]{Burkhardt}.
\end{center}
Here the characters in the second and third row have degree $\frac{q+\delta}{2}$ and the character in the fourth row is the Steinberg character of degree $q$. By \cite[Lemmas 4.4, 4.5]{White} all these characters are invariant under the outer automorphism of order $f$, while the two characters of degree $\frac{q+\delta}{2}$ form an orbit under the action of the outer automorphism of order $2$. 
This implies that the decomposition matrix of the principal $2$-block of $G$ is the same as that for $\PGL(2,q)$, which is
\begin{center}
$\begin{pmatrix} 1 & 0 \\ 1 & 0 \\ 0 & 1 \\ 1 & 1 \\ 1 & 1  \end{pmatrix}$ \ if  $q \equiv 3 \ (\textrm{mod } 8)$, \ or $\begin{pmatrix} 1 & 0 \\ 1 & 0 \\ 2 & 1 \\ 1 & 1 \\ 1 & 1  \end{pmatrix}$ \ if $q \equiv -3 \ (\textrm{mod } 8)$.
\end{center}
By Proposition~\ref{prop:OutPSL2} the Sylow 2-subgroup of $\PGL(2,q)$ is dihedral of order $8$ and hence so is the Sylow 2-subgroup of $G$. The theorem hence follows from Proposition~\ref{prop:ApplicationD8}.
\end{proof}

We are now ready to answer the Prime Graph Question for new classes of almost simple groups by proving Theorem~\ref{th:ApplicationPQ}.

\begin{proof}[Proof of Theorem~\ref{th:ApplicationPQ}]
The case where $q$ is a prime is included in Proposition~\ref{prop:PSL2KnwonStuff}. So let us assume that $q=p^f$ for $f\geq 2$ and that the odd part of $(q-1)(q+1)$ is square free. We note that this implies that $f$ is comprime to $(p^f-1)(p^f+1)$. Indeed, if $r$ is an odd prime that divides $p^f-1$ and $f$, say $f = rm$, then $p^f \equiv 1 \ (\textrm{mod } r)$ implies $p^{rm} \equiv 1 \ (\textrm{mod } r)$. But this implies $p^m \equiv 1 \ (\textrm{mod } r)$ and therefore $p^{rm}-1 \equiv 0 \ (\textrm{mod } r^2)$, so that the odd part of $p^f-1$ is not square free. Similarly $p^f \equiv -1 \ (\textrm{mod } r)$ and $r \mid f$ implies $p^f \equiv -1 \ (\textrm{mod } r^2)$. 

Recall that $\PSL(2,q)$ has order $\frac{(q-1)q(q+1)}{2}$ and that the outer automorphism group has order $2f$ where $q=p^f$ (cf. Proposition~\ref{prop:OutPSL2}). Now assume $u \in V(\mathbb{Z}G)$ is of order $rs$ for some primes $r$ and $s$. Consider first the case where both $r$ and $s$ are divisors of the order of $\PSL(2,q)$. If one of them, say $r$, is odd but does not equal $p$, then a Sylow $r$-subgroup of $G$ has order $r$ by the assumptions and the fact that $r$ does not divide $f$. So $G$ contains an element of order $rs$ by Theorem~\ref{th:CaicedoMargolis}. If on the other hand $rs = 2p$, then $u$ cannot exist by Theorem~\ref{th:PSLOrder2p}. 
Finally, consider the case that one of $r$ or $s$ does not divide the order of $\PSL(2,q)$. Then $G$ contains an element of order $rs$ by Proposition~\ref{prop:KimmerleKonovalovQuotient}. Overall, the Prime Graph Question has a positive answer for $G$.
\end{proof}

We finish with a corollary improving a previous result:

\begin{corollary}
Assume that  $G$ is an almost simple group whose order is divisible by at most four pairwise distinct primes and that the Prime Graph Question has a negative answer for $G$. Then the socle of $G$ is in the following finite list:
\[\PSL(2,81), \ \PSL(2,243), \ \PSL(3,7), \ \PSL(3,8), \ \PSL(3,17), \ \operatorname{PSp}(4,7), \ \operatorname{Sz}(32). \]
\end{corollary}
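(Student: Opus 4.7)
The plan is to invoke the previous classification of almost simple groups of order divisible by at most four pairwise distinct primes for which the Prime Graph Question was unresolved, which was given by Bächle--Margolis (in \cite{BachleMargolis4primaryI, BachleMargolis4primaryII}), and then to use Theorem~\ref{th:ApplicationPQ} to eliminate the additional $\PSL(2,q)$ entries that appear in their list but not in ours. Concretely, I would first write down the finite list of possible socles produced by the four-primary classification; it contains the seven groups appearing in the corollary together with finitely many further candidates, all of the form $\PSL(2,q)$ for specific small prime powers $q$. For each such extra candidate I would then verify the hypotheses of Theorem~\ref{th:ApplicationPQ} and thereby conclude that the Prime Graph Question has a positive answer for every almost simple group with that socle.

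The key arithmetic step is the following bookkeeping check: a four-primary $\PSL(2,q)$ with $q=p^f$ which is \emph{not} in the final list should satisfy $q\equiv\pm 3\ (\textrm{mod}\ 8)$ together with either $q$ prime or the odd part of $(q-1)(q+1)$ being square-free, so that Theorem~\ref{th:ApplicationPQ} applies. Conversely, the two $\PSL(2,q)$-socles retained in the statement lie outside the scope of Theorem~\ref{th:ApplicationPQ}: for $q=81$ we have $81\equiv 1\ (\textrm{mod}\ 8)$, so the Sylow $2$-subgroup of $\PSL(2,81)$ is not a Klein four group and the hypothesis fails; for $q=243$ we have $243\equiv 3\ (\textrm{mod}\ 8)$, but $(243-1)(243+1)=2^3\cdot 11^2\cdot 61$ has odd part $11^2\cdot 61$, which is not square-free, so again the theorem does not apply. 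The remaining socles $\PSL(3,7)$, $\PSL(3,8)$, $\PSL(3,17)$, $\operatorname{PSp}(4,7)$, $\operatorname{Sz}(32)$ are not of the form $\PSL(2,q)$ and are therefore untouched by our methods; they must simply be carried over from the previous classification.

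The main obstacle is not any new ideas but rather making sure that no four-primary $\PSL(2,q)$-candidate has been missed: one has to enumerate the finitely many prime powers $q$ with at most four distinct primes dividing $|\PSL(2,q)|$, cross-reference them with the earlier classification of cases where the Prime Graph Question was open, and check each one against the two hypotheses of Theorem~\ref{th:ApplicationPQ}. Once this finite check has been carried out, the corollary follows immediately by combining Theorem~\ref{th:ApplicationPQ} with the previously known four-primary classification.
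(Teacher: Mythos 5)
Your general strategy — invoke the four-primary classification of Bächle--Margolis and then knock out the extra $\PSL(2,q)$ candidates with Theorem~\ref{th:ApplicationPQ} — is the right one, and matches the paper's. However, there is a genuine gap in how you describe the set of remaining candidates, and it is not a small bookkeeping issue: it is precisely where the hard part of the proof lies.

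You claim that the four-primary classification produces, beyond the seven retained socles, only ``finitely many further candidates, all of the form $\PSL(2,q)$ for specific small prime powers $q$,'' and that the argument then reduces to a finite case-check. That is not what the earlier classification gives. By \cite[Theorem A]{BachleMargolis4primaryII}, what remains after that paper is the seven groups in the statement together with the (a priori infinite and not explicitly enumerable) family of socles $\PSL(2,3^f)$ with $f=3$ or $f\geq 7$ whose order is four-primary. It is not known that only finitely many such $f$ exist — this is a ``twin-prime-like'' condition — so your proposed finite enumeration cannot be carried out. The missing ingredient, which the paper supplies, is a number-theoretic theorem of Bugeaud--Cao--Mignotte (\cite[Theorem B]{BugeaudCaoMignotte}): if $f\geq 7$ and $\PSL(2,3^f)$ is four-primary, then both $\frac{3^f-1}{2}$ and $\frac{3^f+1}{4}$ must be prime. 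From this one reads off at once that $f$ is odd (hence $3^f\equiv\pm 3\ (\textrm{mod }8)$) and that the odd part of $(3^f-1)(3^f+1)$ is square-free, so Theorem~\ref{th:ApplicationPQ} applies uniformly to all of those $f\geq 7$; the single remaining case $f=3$ is handled directly by the same theorem. Without this input your ``key arithmetic step'' cannot even get started, because you have no list to check. (Your verification that $\PSL(2,81)$ and $\PSL(2,243)$ escape Theorem~\ref{th:ApplicationPQ} is correct and consistent with those two socles being retained, but that is the easy direction.)
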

\begin{proof}
The Prime Graph Question for groups of order divisible by at most four pairwise different primes was investigated in \cite{BachleMargolis4primaryI, BachleMargolis4primaryII}. In particular, by \cite[Theorem A]{BachleMargolis4primaryII} to prove the claim of the corollary it suffices to answer the Prime Graph Question for almost simple groups with socle $\PSL(2,3^f)$ whose order is divisible by at most four different primes and such that $f=3$ or $f \geq 7$. If $f=3$, the Prime Graph Question has a positive answer by Theorem~\ref{th:ApplicationPQ}, so assume $f \geq 7$. In this critical case by \cite[Theorem B]{BugeaudCaoMignotte} both $\frac{3^f-1}{2}$ and $\frac{3^f+1}{4}$ are prime. In particular, $f$ is odd and $3^f$ is hence congruent to $\pm 3$ modulo $8$. Also the odd part of $(3^f-1)(3^f+1)$ is square free. So we can apply Theorem~\ref{th:ApplicationPQ}.
\end{proof}

\begin{remark}
Theorem~\ref{th:ApplicationPQ} in particular answers the Prime Graph Question for $\PSL(2,27)$, the smallest simple group for which it had been open, a fact highlighted already at the end of \cite{HertweckHoefertKimmerle}. The smallest group for which the Prime Graph Question is open now is $\PSL(2,64)$. More related to the groups studied here the question remains open for $\PSL(2,81)$ which has a dihedral group of order 16 as a Sylow $2$-subgroup. In both cases units of order $6$ are critical.
\end{remark}

%

\section*{Funding}
This work was supported by the Madrid Government (Comunidad de Madrid - Spain) under the multiannual Agreement with UAM in the line for the Excellence of the University Research Staff in the context of the V PRICIT (Regional Programme of Research and Technological Innovation). The authors acknowledge financial support from the Spanish Ministry of Science and Innovation, through the Severo Ochoa Programme for Centers of Excellence in R\&D (CEX2019-000904-S) and through the Ramon y Cajal grant programme.

\bibliographystyle{amsalpha}
\bibliography{references}
\end{document}